\newtheorem{theorem}{Theorem}
\newtheorem{lemma}[theorem]{Lemma}
\newtheorem{observation}[theorem]{Observation}
\newcommand{\cref}[1]{(\ref{#1})}
\newcommand{\fec}{{\cal C}_4}
\newcommand{\pec}{{\cal C}_5}
\newcommand{\subp}{{\cal P}}
\title{Graphs avoiding immersion of $K_{3,3}$}
\author{Zden\v{e}k Dvo\v{r}\'ak\thanks{Charles University, Prague, Czech Republic. E-mail: \texttt{rakdver@kam.mff.cuni.cz}.
Supported by the Neuron Foundation for Support of Science under Neuron Impuls programme.}\and
Michal Hru\v{s}ka\thanks{Charles University, Prague, Czech Republic.}}
\begin{document}
\maketitle

\begin{abstract}
DeVos and Malekian~\cite{devos2018structure} gave a structural description of graphs avoiding an immersion of $K_{3,3}$,
showing that all such graphs are composed over small edge-cuts from graphs
with at most 8 vertices and from $3$-regular planar graphs.  We provide another proof of this fact, simpler in some aspects.
\end{abstract}

\section{Introduction}

Throughout the paper, graphs are undirected and may contain parallel edges, but no loops.
An \emph{immersion} $\beta$ of a graph $H$ in $G$ assigns to vertices $v\in V(H)$ distinct
vertices $\beta(v)\in V(G)$, and to edges $e=uv\in E(H)$ pairwise edge-disjoint paths $\beta(e)$
from $\beta(u)$ to $\beta(v)$ in $G$.  The vertices $\beta(V(H))$ of $G$ are called the \emph{branch vertices}
of the immersion.

The notion of immersion is related to other graph inclusion notions such as \emph{topological minors}
(where the paths are required to be vertex-disjoint) and \emph{minors} (where vertices of $H$ are represented
by pairwise vertex-disjoint connected subgraphs of $G$).  The structural theory of graphs avoiding a fixed
graph as a minor~\cite{robertson2003graph} or as a topological minor~\cite{gmarx} has been a subject of a lot of
research, in part motivated by numerous theoretical and algorithmic applications.  In particular, for many small
graphs $H$, an exact characterization of graphs avoiding $H$ as a minor is known.  Relevantly for the present
work, a graph avoids $K_{3,3}$ as a minor (or a topological minor) if and only if it can be obtained from planar graphs and copies of $K_5$
by gluing on cliques of size at most two~\cite{wagner}.

The theory of classes that avoid a fixed graph as an immersion is somewhat less developed.  While there are
results on their general structure~\cite{wollims}, not much is known for particular graphs.
Giannopoulou et al.~\cite{giannopoulou2015forbidding} gave a partial result for $K_{3,3}$: They prove
that a sufficiently edge-connected graph avoiding $K_{3,3}$ as an immersion is either planar and $3$-regular,
or has small treewidth.  However, while planar $3$-regular graphs can easily be seen to never contain an immersion of $K_{3,3}$,
treewidth is not a natural parameter to consider in the setting of immersions---every graph can be immersed in some simple graph
of treewidth two.

In this paper, we present a more precise characterization: we show that a graph avoids $K_{3,3}$ as an immersion if and only if
it can be obtained from graphs with at most $8$ vertices without an immersion of $K_{3,3}$ and from planar $3$-regular graphs
via well-described operations (compositions over small edge-cuts).  Thus, the characterization is exact up to enumeration
of the graphs without immersion of $K_{3,3}$ with at most $8$ vertices.

Shortly after finishing this writeup, we learned that an essentially identical result was obtained a few months before
by DeVos and Malekian~\cite{devos2018structure}.  Compared to the present paper, they also include the exact description of the small
obstructions, obtained by computer-assisted enumeration\footnote{We only use the computer enumeration to show that there are no
obstructions with $9$ vertices, and to prove a specific property of the obstructions with $6$ to $8$ vertices, see Lemmas~\ref{lemma-no9} and \ref{lemma-no678}.
However, the programs we use could be extended to provide an explicit list of obstructions, which we initially planned to do but decided
not to once we learned the result has already been obtained in~\cite{devos2018structure}.}.  On the other hand, their overall argument is somewhat more involved
than ours and uses another non-trivial structural result on graphs avoiding an immersion of the wheel with four spokes~\cite{devos2018structurew4}.
Furthermore, as the computer-assisted enumeration is used in both proofs, we believe an independent confirmation of the result may be of interest.

\section{Preliminaries}

Let $G$ be a graph.
Let $e_1$ and $e_2$ be non-parallel edges incident with the same vertex $v$ of $G$, and let $u_1$ and $u_2$ be the
ends of these edges other than $v$.  By \emph{splitting off the edges $e_1$ and $e_2$}, we mean deleting $e_1$ and $e_2$
and adding a new edge between $u_1$ and $u_2$.  Note that if a graph $H$ immerses in the resulting graph, then $H$ also immerses in $G$.

In a connected graph $G$, a \emph{separation}
is a pair $(A,B)$ of non-empty disjoint sets $A,B\subseteq V(G)$ such that $V(G)=A\cup B$,
and a \emph{cut} associated with the separation is the set $\partial(A,B)$ of edges of $G$
with one end in $A$ and the other end in $B$ (or $\partial_G(A,B)$ when the graph $G$ is not clear from the context);
we will also write $\partial A$ or $\partial B$ to denote this set.
The \emph{order} of the cut or separation
is the number of such edges.  The sets $A$ and $B$ are the \emph{sides} of the separation
or cut (when $G$ is connected, the cut determines its sides).
We often use the following obvious fact.

\begin{observation}\label{obs-cs}
If $(A,B)$ is a separation of $H$ of order $k$ and $\beta$ is an immersion of $H$ in $G$,
then $G$ contains $k$ pairwise edge-disjoint paths from $\beta(A)$ to $\beta(B)$;
and in particular, $G$ does not contain a separation $(C,D)$ of order less than $k$ with $\beta(A)\subseteq C$
and $\beta(B)\subseteq D$.
\end{observation}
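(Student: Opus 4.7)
The plan is to read the conclusion directly off the definition of an immersion. The separation $(A,B)$ of $H$ has exactly $k$ edges $e_1,\ldots,e_k$ crossing between $A$ and $B$. Under $\beta$, each such edge $e_i=u_iv_i$ (with $u_i\in A$, $v_i\in B$) is assigned a path $\beta(e_i)$ in $G$ from $\beta(u_i)\in\beta(A)$ to $\beta(v_i)\in\beta(B)$, and by the definition of immersion these $k$ paths are pairwise edge-disjoint. Hence they already constitute $k$ pairwise edge-disjoint walks from $\beta(A)$ to $\beta(B)$ in $G$.

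The only small nuance is that the paths $\beta(e_i)$ might visit vertices of $\beta(A)$ and $\beta(B)$ internally, so they are not literally paths from $\beta(A)$ to $\beta(B)$ in the ``minimal'' sense. I would handle this by taking, inside each $\beta(e_i)$, the last vertex lying in $\beta(A)$ and the first subsequent vertex lying in $\beta(B)$, and keeping only the sub-path between them. These sub-paths are still pairwise edge-disjoint (being sub-walks of edge-disjoint walks), start in $\beta(A)$, end in $\beta(B)$, and have no internal vertex in $\beta(A)\cup\beta(B)$; this yields the desired $k$ edge-disjoint paths.

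For the second part, suppose $(C,D)$ is a separation of $G$ with $\beta(A)\subseteq C$ and $\beta(B)\subseteq D$. Then every path from $\beta(A)$ to $\beta(B)$ in $G$ must contain an edge of $\partial(C,D)$. Since the $k$ paths constructed above are pairwise edge-disjoint, they require at least $k$ distinct edges of $\partial(C,D)$, so $|\partial(C,D)|\ge k$. Equivalently, no separation of order less than $k$ can separate $\beta(A)$ from $\beta(B)$.

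I do not expect any genuine obstacle here; the statement is essentially a restatement of the definition plus a one-line cut-counting argument (morally the ``easy direction'' of Menger's theorem applied to the multigraph $G$). The only thing to be careful about is ensuring edge-disjointness is preserved when truncating the paths $\beta(e_i)$ to genuine $\beta(A)$-$\beta(B)$ paths.
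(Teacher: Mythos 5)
Your argument is correct, and it is exactly the argument the paper treats as self-evident: the paper labels this an ``obvious fact'' and gives no proof at all, so there is nothing to compare against in detail. Your observation that the $k$ paths $\beta(e_1),\ldots,\beta(e_k)$ assigned to the cut edges are, by the definition of immersion, pairwise edge-disjoint and have endpoints in $\beta(A)$ and $\beta(B)$ respectively, followed by the one-line cut-counting argument for the ``in particular'' clause, is the intended reasoning. The truncation step you add is harmless extra care (a path with endpoints in $\beta(A)$ and $\beta(B)$ already qualifies as a path from $\beta(A)$ to $\beta(B)$, and for the cut-counting argument only the endpoints matter), but it does no damage and shows you checked the edge case.
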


A graph is \emph{$k$-edge-connected} if it has no cuts of order less than $k$.
It is \emph{internally $4$-edge-connected} if it is $3$-edge-connected and every cut of order $3$
has a side consisting of a single vertex.  A graph is \emph{weakly $5$-edge-connected} if it is internally
$4$-edge-connected and every cut of order $4$ has a side consisting of at most two vertices.
We say that a side $A$ of a cut of order $k$ is \emph{acceptable} if $k=3$ and $|A|=1$, or $k=4$ and $|A|\le 2$, or $k\ge 5$;
hence, $G$ is weakly $5$-edge-connected if and only if every cut has an acceptable side.

Let $H_1$ and $H_2$ be graphs with at least three vertices and let $v_1\in V(H_1)$ and $v_2\in V(H_2)$ be vertices of the same
degree $d$.  A graph $G$ is obtained from graphs $H_1$ and $H_2$ by a \emph{join on $v_1$ and $v_2$}
if for some bijection $\pi$ between the edges incident with $v_1$ and $v_2$, $G$ is created by removing the
vertices $v_1$ and $v_2$, and for each edge $e$ incident with $v_1$, adding the edge between the vertices
incident with $e$ and $\pi(e)$ distinct from $v_1$ and $v_2$.  Note that $(V(H_1-v_1),V(H_2-v_2))$ is a separation in $G$
of order $d$, which we say to be \emph{associated} with the join.  We need the following observation.

\begin{lemma}\label{lemma-3join}
Suppose $G$ is a join of $H_1$ and $H_2$ on vertices $v_1$ and $v_2$ of degree at most three.  If $G$ contains
$K_{3,3}$ as an immersion, then $H_1$ or $H_2$ contains $K_{3,3}$ as an immersion.  If $H_1-v_1$ and $H_2-v_2$ are connected,
then the converse holds as well.
\end{lemma}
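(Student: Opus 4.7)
The plan is to exploit the separation $(A, B) := (V(H_1 - v_1), V(H_2 - v_2))$ of $G$ associated with the join, which has order $d = \deg v_1 \le 3$, together with Observation~\ref{obs-cs} and the fact that in $K_{3,3}$ every non-trivial separation has order at least $3$, with equality attained only when one side consists of a single vertex.

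For the forward direction, let $\beta$ immerse $K_{3,3}$ in $G$, and partition its six branch vertices into $A_\beta := \beta(V(K_{3,3})) \cap A$ and $B_\beta := \beta(V(K_{3,3})) \cap B$. Observation~\ref{obs-cs} applied to the induced separation of $K_{3,3}$ forces $\min(|A_\beta|, |B_\beta|) \le 1$ (if one of them is empty the conclusion is automatic). By symmetry suppose $|A_\beta| \le 1$; I will construct an immersion $\beta'$ of $K_{3,3}$ in $H_2$ by keeping the branch vertices lying in $B$ fixed and, if $A_\beta = \{\beta(x)\}$, setting $\beta'(x) := v_2$. A short counting of cut crossings (tracking parity and using that the total number of crossings is at most $d$) shows that every $\beta$-path either lies entirely in $B$, makes a single two-edge excursion into $A$ that can be short-circuited through $v_2$, or (when one endpoint is $\beta(x)$) uses exactly one cut edge whose $A$-portion is collapsed to $v_2$. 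Edge-disjointness transfers automatically because distinct cut edges of $G$ correspond to distinct edges at $v_2$ in $H_2$. The symmetric case $|B_\beta| \le 1$ yields an immersion in $H_1$.

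For the converse, assume $H_1 - v_1$ and $H_2 - v_2$ are connected and, without loss of generality, $K_{3,3}$ immerses in $H_1$ via $\alpha$. If $v_1$ is not a branch vertex, then since each path visiting $v_1$ consumes two of its at most three edges and the edges at $v_1$ are used pairwise-disjointly, at most one $\alpha$-path passes through $v_1$; that path is rerouted in $G$ by replacing its $v_1$-transit with its entering cut edge $e_i$, any path from $b_i$ to $b_j$ in the connected graph $H_2 - v_2$, and its leaving cut edge $e_j$, while the remaining $\alpha$-paths lie in $H_1 - v_1 \subseteq G$ and transfer unchanged. If $v_1$ is a branch vertex $\alpha(x)$, then necessarily $\deg v_1 = 3$ and all three of its edges are consumed by the three $\alpha$-paths incident to $\alpha(x)$; I then fix a spanning tree $T$ of $H_2 - v_2$ and choose $v'$ to be the median of the three cut-endpoints $b_1, b_2, b_3$ in $T$, so that the three $T$-paths from $v'$ to the $b_i$ are pairwise edge-disjoint. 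Setting $\beta'(x) := v'$ and concatenating each $T$-path with the corresponding cut edge $e_i$ and the $H_1 - v_1$-tail of the original $\alpha$-path gives the three new paths at $x$; all other $\alpha$-paths avoid $v_1$ (its incident edges are exhausted) and transfer unchanged to $G$.

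The main obstacle is the reverse direction when $v_1$ is a branch vertex, since we need three pairwise edge-disjoint walks in $H_2 - v_2$ from a single common vertex to the three prescribed targets $b_1, b_2, b_3$, and bare connectivity of $H_2 - v_2$ does not obviously supply them. The rescue is the classical tree-median property: in any spanning tree of $H_2 - v_2$ the median of three vertices splits the three tree-paths to them into edge-disjoint branches, so passing to a spanning tree reduces the problem to a free consequence of connectivity. Once this is in place, verifying edge-disjointness across the three regions---the $H_1 - v_1$-tails, the cut edges, and the $H_2 - v_2$ rerouting---is routine bookkeeping since these regions share no edges.
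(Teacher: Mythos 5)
Your proof is correct and follows essentially the same route as the paper: the forward direction uses the associated separation of order $\le 3$ together with Observation~\ref{obs-cs} (the paper simply contracts the side containing at most one branch vertex, where you carry out the same collapse by hand via crossing-parity bookkeeping), and the converse rests on producing a vertex in $H_2-v_2$ joined to $v_2$ by $\deg(v_2)$ edge-disjoint paths and rerouting through it (the paper phrases this as $H_1$ immersing in $G$ and invokes transitivity rather than your case split on whether $v_1$ is a branch vertex). Your spanning-tree-median argument usefully makes explicit the existence of that vertex $w$, which the paper asserts without justification.
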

\begin{proof}
Let $(A,B)=(V(H_1-v_1),V(H_2-v_2))$ be the separation of $G$ of order at most three associated with the join.

Suppose $G$ contains $K_{3,3}$ as an immersion $\beta$.
Since $K_{3,3}$ is internally $4$-edge-connected, Observation~\ref{obs-cs} implies that at most one of $A$ and $B$
contains more than one branch vertex of $\beta$.  If say $B$ does not contain any branch vertex, then at most one path of $\beta$
passes through $B$.  If $B$ contains exactly one branch vertex $v$, then only the paths of $\beta$ ending in $v$ intersect $B$.
In either case, contracting $B$ to a single vertex transforms $G$ to $H_1$ and $\beta$ to an immersion of $K_{3,3}$ in $H_2$.

Suppose now that say $H_1$ contains $K_{3,3}$ as an immersion and $H_2-v_2$ is connected.  Since $\deg(v_2)\le 3$,
there exists a vertex $w\in B$ joined to $v_2$ in $H_2$ by $\deg(v_2)$ pairwise edge-disjoint paths.  By considering $w$ and the paths
as a replacement for $v_1$ and the incident edges, we conclude that $H_1$ is immersed in $G$.  Since the relation of immersion is transitive,
it follows that $K_{3,3}$ is immersed in $G$.
\end{proof}

\section{Graphs of small edge-connectivity}

Lemma~\ref{lemma-3join} allows us to restrict our attention only to internally $4$-edge-connected graphs.
Let $\fec$ be the class of internally $4$-edge-connected graphs without immersion of $K_{3,3}$.

\begin{lemma}\label{lemma-4con}
A graph does not contain $K_{3,3}$ as an immersion if and only if $G$ is obtained from graphs in $\fec$
by repeated application of the following operations:
\begin{itemize}
\item joins on vertices of degree at most three,
\item disjoint unions,
\item subdividing edges, and
\item adding pendant vertices joined by single or double edges.
\end{itemize}
\end{lemma}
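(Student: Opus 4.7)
I would prove both directions by induction, with the ``built up $\Rightarrow$ avoids $K_{3,3}$'' direction being routine and the converse forming the main work.

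For ``built up $\Rightarrow$ avoids'', I induct on the number of composition operations used and verify that each operation preserves $K_{3,3}$-immersion avoidance. Disjoint unions preserve it because $K_{3,3}$ is connected. Subdividing an edge $uw$ is harmless because the degree-$2$ subdivision vertex cannot be a branch vertex of $K_{3,3}$ (which has minimum degree $3$) and is interior to at most one edge-disjoint path of any immersion, which can always be rerouted through the edge $uw$. Adding a pendant or double pendant is similarly harmless since the added vertex has degree at most $2$ and offers no edge-disjoint paths usable in an immersion. Joins on vertices of degree at most three are handled directly by the forward direction of Lemma~\ref{lemma-3join}.

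For the converse, I induct on $|V(G)|+|E(G)|$. If $G\in\fec$ we are done. Otherwise I first peel off low-complexity features: if $G$ is disconnected, write it as a disjoint union; if $G$ has a vertex of degree $1$, remove it as a pendant; if $G$ has a degree-$2$ vertex whose two edges are parallel, remove it as a double pendant; if $G$ has a degree-$2$ vertex with distinct neighbors, suppress it (undoing a subdivision, where the argument that suppression preserves $K_{3,3}$-immersion avoidance is the same bypass argument as above). In each subcase the resulting graph(s) still avoid $K_{3,3}$ as an immersion and are strictly smaller in $|V|+|E|$, so the inductive hypothesis applies.

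The main remaining case is that $G$ is connected with minimum degree $\ge 3$ and has a cut of order $d^*\le 3$ with both sides of size $\ge 2$. To apply Lemma~\ref{lemma-3join} I need such a cut $(A,B)$ with both $G[A]$ and $G[B]$ connected; this is the only delicate step. I would choose $(A,B)$ minimizing $d^*$ first and then, subject to that, minimizing $\min(|A|,|B|)$. If $G[A]$ were disconnected with a component $A_1\subsetneq A$, the cut $(A_1,V\setminus A_1)$ has order $e(A_1,B)\le d^*$: the case $|A_1|\ge 2$ would contradict the minimality of $\min(|A|,|B|)$, while $A_1=\{v\}$ would force $\deg_G(v)=e(A_1,B)=3=d^*$ by the minimum-degree assumption, so no edge would join $A\setminus\{v\}$ to $B$, contradicting the connectedness of $G$. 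A symmetric argument shows $G[B]$ is connected. Then I form $H_1,H_2$ by adjoining new vertices $v_1,v_2$ attached to the respective cut endpoints; each $H_i$ has at least three vertices and strictly smaller $|V|+|E|$ than $G$, and by Lemma~\ref{lemma-3join} each avoids $K_{3,3}$ immersion, so induction completes the argument. The main obstacle is the minimality argument isolating a cut with both sides connected; the rest is a routine case analysis.
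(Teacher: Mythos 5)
Your proposal takes essentially the same route as the paper: both directions handled via Lemma~\ref{lemma-3join}, with induction peeling off components, pendants, and subdivisions, and the crux being to find a small cut whose sides induce connected subgraphs so that it corresponds to a join.

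One correction on your ``delicate step.'' You minimize first the cut order $d^*$, then $\min(|A|,|B|)$, and for a component $A_1\subsetneq A$ with $|A_1|\ge 2$ you claim the cut $(A_1,V\setminus A_1)$ violates the secondary minimality. That inference fails when $|A|>|B|$: there $\min(|A|,|B|)=|B|$, while $\min(|A_1|,|V\setminus A_1|)$ may equal $|A_1|$, which can exceed $|B|$, so no contradiction follows. The fix is that the secondary minimization is unnecessary, as the paper implicitly observes. Since $A_1$ is a component of $G[A]$, every edge leaving $A\setminus A_1$ goes to $B$, and connectedness of $G$ forces at least one such edge; hence $e(A_1,B)<d^*$ always. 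Then $|A_1|\ge 2$ gives a cut of order $<d^*$ with both sides of size $\ge 2$ (using $|V\setminus A_1|\ge|B|\ge 2$), contradicting minimality of $d^*$; and $|A_1|=1$ forces a vertex of degree $e(A_1,B)<d^*\le 3$, contradicting the minimum-degree assumption. With that repair your argument matches the paper's, which simply picks a separation of minimum order and asserts both sides are connected.
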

\begin{proof}
By Lemma~\ref{lemma-3join}, joins on vertices of degree at most three cannot create an immersion of $K_{3,3}$,
and clearly neither can disjoint unions, edge subdivisions, or adding pendant vertices of degree at most two.  Hence, the graphs arising as described
do not contain $K_{3,3}$ as an immersion.

Conversely, we will prove by induction on the number of vertices that every $K_{3,3}$-immersion-free graph can be created as described.
Suppose $G$ is $K_{3,3}$-immersion-free and the claim holds for all graphs with fewer than $|V(G)|$ vertices.
If $G$ is disconnected, then it is a disjoint union of its components, which are $K_{3,3}$-immersion-free; hence, the claim follows by
the induction hypothesis.  If $G$ contains a vertex of degree one or two, then $G$ is obtained from a smaller $K_{3,3}$-immersion-free
graph by either subdividing an edge or adding a pendant vertex joined by a single or double edge, and again the claim follows by the induction
hypothesis.  Therefore, we can assume that $G$ is connected and has minimum degree at least three.
The claim is trivial if $G$ is internally $4$-edge-connected; hence, we can assume $G$ has a separation $(A,B)$
of order $k\le 3$, such that $|A|,|B|\ge 2$ if $k=3$.  If $k\le 2$, note that $|A|,|B|\ge 2$ as well, since $\delta(G)\ge 3$.
Consider such a separation with $k$ minimum; then $G[A]$ and $G[B]$ are connected.  In this case, $G$ is a join of graphs $H_1$ and $H_2$
on vertices of degree $k$ with associated separation $(A,B)$, and by Lemma~\ref{lemma-3join}, the graphs $H_1$ and $H_2$
are $K_{3,3}$-immersion-free.  Since $|A|,|B|\ge 2$, we have $|V(H_1)|,|V(H_2)|<|V(G)|$, and thus the claim follows by the induction hypothesis.
\end{proof}

Hence, we only need to describe the structure of graphs in $\fec$.

\section{Non-planar graphs}

The case of non-planar graphs is quite simple, thanks to Kuratowski theorem.
If $H$ is a subgraph of $G$, then an \emph{immersion $H$-bridge} in $G$ is a connected component of
the graph $G-E(H)$ with at least two vertices; hence, distinct vertices of $G$ are joined by a path
edge-disjoint from $H$ if and only if they belong to the same immersion $H$-bridge.  Note
also that immersion $H$-bridges are pairwise vertex-disjoint.
The \emph{attachments} of an immersion $H$-bridge $K$ are the vertices of $V(H)\cap V(K)$.

\begin{lemma}\label{lemma-nonpl}
If $G$ is a $3$-edge-connected non-planar graph with at least $6$ vertices,
then $G$ contains an immersion of $K_{3,3}$.
\end{lemma}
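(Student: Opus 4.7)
The plan is to combine Kuratowski's theorem with an explicit bipartition construction. Since $G$ is non-planar, by Kuratowski's theorem it contains either a $K_{3,3}$-subdivision or a $K_5$-subdivision; a $K_{3,3}$-subdivision is already a $K_{3,3}$-immersion, so I may assume the former does not occur and fix a $K_5$-subdivision $H\subseteq G$ with branch vertices $b_1,\ldots,b_5$ and internally disjoint paths $P_{ij}$ for $1\le i<j\le 5$. Because $|V(G)|\ge 6>|\{b_1,\ldots,b_5\}|$, either some path $P_{ij}$ carries an internal subdivision vertex $w$, or $H$ coincides with $K_5$ and there is a vertex $v\in V(G)\setminus V(H)$.

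The engine of the proof is the following bipartition trick. Suppose one can find a sixth vertex $x\in V(G)$ together with three pairwise edge-disjoint paths $Q_i, Q_j, Q_k$ from $x$ to three distinct branch vertices $b_i, b_j, b_k$, using only edges disjoint from the six paths $P_{il}, P_{jl}, P_{kl}, P_{im}, P_{jm}, P_{km}$, where $\{l,m\}=\{1,\ldots,5\}\setminus\{i,j,k\}$. Then these nine paths are pairwise edge-disjoint and realize a $K_{3,3}$-immersion with bipartition $\{x,b_l,b_m\}\sqcup\{b_i,b_j,b_k\}$. In the subdivision-vertex scenario I take $x:=w$ on $P_{12}$: the two halves of $P_{12}$ at $w$ supply two of the $Q$'s, and the third edge $e=wu$ at $w$ (guaranteed by $3$-edge-connectivity together with $\deg_H(w)=2$) extends to the third $Q$ through a short segment of $H$ once I pick the index $k$ based on where $u$ sits. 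In the outside-vertex scenario ($H=K_5$) I take $x:=v$; applying Menger's theorem between $v$ and $V(H)$ yields three edge-disjoint paths that, after truncation at first contact with $V(H)$, lie entirely in $G-E(K_5)$, and if their endpoints are three distinct $b_i$'s they serve as the required $Q_i, Q_j, Q_k$. A useful variant is the bipartition $\{w,b_l,b_m\}\sqcup\{u,b_i,b_j\}$, which drops $b_k$ entirely and absorbs $u$ as a branch vertex: it works whenever $u$ is internal to $P_{lm}$ with $\{l,m\}\cap\{i,j\}=\emptyset$, the two halves of $P_{lm}$ at $u$ filling in two more of the nine paths.

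The main obstacle is the degenerate configurations: when $u$ lies on a path $P_{i'j'}$ sharing an endpoint with $P_{ij}$ (or on $P_{ij}$ itself), and when the three Menger paths from $v$ end at only one or two distinct branch vertices. To handle them I would choose $H$ so as to minimize $|E(H)|$, which by the standard rerouting principle rules out most chord configurations (any chord producing a shorter subdivision contradicts minimality), and then fall back on the immersion $H$-bridge structure for the residual cases: since $3$-edge-connectivity forces each bridge to have at least three edges leaving its interior, a Menger-type argument inside the bridge produces an edge-disjoint triple of paths to three distinct attachments of $H$, after which the bipartition trick concludes.
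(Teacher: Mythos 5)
Your overall strategy is the same as the paper's: invoke Kuratowski, reduce to a $K_5$-subdivision $H$, and run a case analysis on how a ``sixth vertex'' attaches back to $H$. The paper is terse about the case analysis (it simply says it is ``straightforward''), whereas you try to organize it via the explicit ``bipartition trick,'' which is a reasonable way to systematize the cases. So the approach is essentially the same; the differences are in the bookkeeping.

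There is, however, a genuine gap in your fallback argument for the residual cases. You assert that, because $3$-edge-connectivity forces at least three edges to leave the interior of any immersion $H$-bridge, a Menger-type argument yields three edge-disjoint paths to \emph{three distinct attachments} of $H$. That is false: the three edges leaving the interior of a bridge may all be incident with the same attachment. For a concrete counterexample, take $H=K_5$ and let $v$ be a single extra vertex joined to $b_1$ by three parallel edges; the bridge containing $v$ has exactly one attachment, $b_1$, yet this $G$ is $3$-edge-connected and still immerses $K_{3,3}$. The conclusion of the lemma is therefore correct, but your stated mechanism for the residual cases does not deliver it. (A related looseness appears in the subdivision-vertex scenario: the third edge $wu$ at $w$ may have $u\notin V(H)$, in which case ``extending through a short segment of $H$'' is not directly available; this too is pushed into the flawed fallback.)

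The fix is within reach and in fact already permitted by your own bipartition trick: the paths $Q_i,Q_j,Q_k$ are only required to avoid the six paths $P_{il},P_{jl},P_{kl},P_{im},P_{jm},P_{km}$, so they are free to reuse $P_{ij},P_{ik},P_{jk},P_{lm}$. Thus, when the Menger paths from $x$ (or the bridge path from $w$) arrive at fewer than three distinct branch vertices, you should extend them along the unused paths of the subdivision to reach three distinct $b_i$'s, rather than insisting on three distinct attachments of the bridge. (In the $K_5+v$ example this is exactly what happens: $Q_1=vb_1$, $Q_2=vb_1b_2$, $Q_3=vb_1b_3$, using $P_{12}$ and $P_{13}$, which are not among the six.) You should also note explicitly why there \emph{is} a bridge joining an internal vertex of a subdivided path $P_{ij}$ to $V(H)\setminus V(P_{ij})$ — this follows from $3$-edge-connectivity by the same $2$-edge-cut argument the paper implicitly uses, and it is needed to ensure the path you work with really leaves $P_{ij}$.
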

\begin{proof}
By Kuratowski theorem, $G$ contains a subdivision of $K_5$ or $K_{3,3}$.  In the latter case
the subdivision gives also an immersion, hence assume the former; let $H$ be a subgraph of $G$ isomorphic
to a subdivision of $K_5$.  Suppose first that $H$ contains a vertex of degree two (i.e., at least one of the edges of $K_5$
is subdivided), and let $P$ be a maximal path of vertices of degree two in $G$.  Since $G$ is $3$-edge-connected, $G$ contains
an immersion $H$-bridge $K$ intersecting both $V(P)$ and $V(H)\setminus V(P)$.
A straightforward case analysis shows that $H$ together with a path in $K$ (and thus also $G$) contains $K_{3,3}$ as an immersion.

Therefore, we can assume $H=K_5$ is a subgraph of $G$.  Since $|V(G)|>5$, there exists a vertex $v\in V(G)\setminus V(H)$,
and since $G$ is $3$-edge-connected, it contains three pairwise edge-disjoint paths from $v$ to $V(H)$.  Again, a straightforward
case analysis shows that $H$ together with these paths (and thus also $G$) contains $K_{3,3}$ as an immersion.
\end{proof}

In particular, all non-planar graphs in $\fec$ have at most five vertices (and since every graph immersing $K_{3,3}$
has at least six vertices, all internally $4$-edge-connected graphs with at most five vertices belong to $\fec$).

\section{Internally $4$-edge-connected graphs}

It will be convenient to improve the connectivity a bit.
Let $\pec$ be the class of internally $5$-edge-connected graphs without immersion of $K_{3,3}$.
In this section, we describe how the graphs in $\fec$ are obtained from the graphs in $\pec$.

Let $\subp$ denote the class of internally $4$-edge-connected $3$-regular planar graphs.
Note that if $H$ is immersed in a $3$-regular graph $G$, then a subdivision of $H$ is a subgraph of $G$;
consequently, all graphs immersed in the graphs from $\subp$ are planar, and thus $K_{3,3}$ is not immersed in
any graph from $\subp$.

Let $(A,B)$ be a separation in a graph $G$ with $\partial(A,B)=\{e_1,e_2,e_3,e_4\}$.
We say that \emph{$B$ immerses an $\{e_1,e_2,e_3,e_4\}$-vertex} if $G$ contains four pairwise edge-disjoint paths
starting in $e_1$, \ldots, $e_4$ and ending in the same vertex in $B$.
We say that \emph{$B$ immerses an $(e_1,e_2,e_3,e_4)$-span}, if there exist distinct vertices $u,v\in B$
and pairwise edge-disjoint paths in $G$ joining $e_1$ and $e_2$ to $u$, $e_3$ and $e_4$ to $v$, and $u$ to $v$.

A graph $G$ is obtained from graphs $F_1$ and $F_2$ with at least five vertices by a \emph{special $4$-join} if for $1\le i\le 2$,
$F_i$ contains a vertex $v_i$ of degree three joined by three edges to a vertex $w_i$ of degree seven, and
$G$ is obtained from $F_1-v_1$ and $F_2-v_2$ by a join on $w_1$ and $w_2$.  The separation $(V(F_1)\setminus \{v_1,w_1\},V(F_2)\setminus\{v_2,w_2\})$
of $G$ is \emph{associated} with the special $4$-join.

A graph $G$ is obtained from a graph $H$ by \emph{pinching a double edge} if $H$ contains exactly
two edges between some vertices $u$ and $v$, and $G$ is created by removing these two edges,
adding a new vertex $w$ and adding double edges between $w$ and $u$, and between $w$ and $v$.

\begin{lemma}\label{lemma-4conn}
A graph belongs to $\fec$ if and only if either it belongs to $\subp$ or it is obtained from graphs in $\pec\setminus\subp$
by repeated application of the following operations:
\begin{itemize}
\item pinching a double edge between vertices of degree four, and
\item special $4$-joins.
\end{itemize}
\end{lemma}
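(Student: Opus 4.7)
For the forward direction, I would verify that pinching a double edge between vertices of degree four and taking special $4$-joins both preserve membership in $\fec$. Internal $4$-edge-connectivity is easy to track: the new cuts introduced either lie inside $\{u,v,w\}$ for pinching or are the order-$4$ separation associated with the join. That no $K_{3,3}$ immersion is created follows from Observation~\ref{obs-cs}: since $K_{3,3}$ is internally $4$-edge-connected, any immersion concentrates all but at most one branch vertex on a single side of the new order-$4$ cut, and the local structure at $w$ (respectively $w_i$) then lets one pull the immersion back to the smaller graph $H$ (respectively $F_1$ or $F_2$).

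For the converse I would induct on $|V(G)|$. I would begin by disposing of the case $\delta(G) = 3$: any such $G \in \fec$ should already lie in $\subp$, because a degree-$3$ vertex alongside a higher-degree vertex --- combined with $3$-edge-connectivity and Lemma~\ref{lemma-nonpl} --- should yield a $K_{3,3}$ immersion, forcing $G$ to be $3$-regular, and planarity then follows from Lemma~\ref{lemma-nonpl}. Assume henceforth $\delta(G) \ge 4$. If $G$ is internally $5$-edge-connected then $G \in \pec \setminus \subp$ and there is nothing to prove; otherwise $G$ admits a $4$-cut $(A,B)$ with $|A|, |B| \ge 2$, and I take one with $|A|$ minimum.

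When $|A| = 2$, writing $A = \{x,y\}$ with $c$ edges $xy$ and $a, b$ edges from $x, y$ into $B$, the inequalities $a+b=4$ and $a+c, b+c \ge 4$ force $c \ge 2$; a case analysis using $|B| \ge 3$ and minimality of the cut should then isolate the pinched configuration $c = 2$, $a = b = 2$, with (say) $y$'s two $B$-edges forming a double edge to a single $v \in B$. Unpinching $y$ yields a smaller $H \in \fec$, and induction applies. When $|A|, |B| \ge 3$, I would construct $F_i$ by appending to the $i$th side a gadget: a new degree-$7$ vertex $w_i$ joined to the four cut-endpoints on that side and triply to a new degree-$3$ pendant $v_i$. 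Minimality of $|A|$ ensures internal $4$-edge-connectivity of $F_i$ (any smaller bad cut in $F_i$ would transport back to a strictly smaller bad cut in $G$), and any $K_{3,3}$ immersion in $F_i$ can be re-routed through the opposite side of the join via four edge-disjoint paths to produce a $K_{3,3}$ immersion in $G$ --- contradiction. Thus $F_1, F_2 \in \fec$ with strictly fewer vertices, induction applies, and $G$ is reassembled as a special $4$-join.

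The main obstacle is the $|A| = 2$ case: systematically eliminating the ``unwanted'' multiplicity patterns ($c \ge 3$, or $c = 2$ with neither of $x, y$ exhibiting a double edge into $B$) and showing that the pinched structure always appears --- likely via careful use of $\delta(G) \ge 4$, the minimality of the chosen cut, and perhaps preliminary splitting off. A secondary difficulty is checking the transport of a $K_{3,3}$ immersion across the special $4$-join, which requires four edge-disjoint paths through the opposite side of the separation and presumably invokes one of the $\{e_1,e_2,e_3,e_4\}$-vertex or $(e_1,e_2,e_3,e_4)$-span notions introduced just before the lemma statement.
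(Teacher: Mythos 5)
Your forward direction is essentially the paper's argument. The converse, however, has two genuine gaps.

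First, your opening reduction is false. You claim that if $\delta(G)=3$ then $G\in\subp$, because a degree-$3$ vertex next to a higher-degree vertex would force a $K_{3,3}$ immersion. This is not so: the graph $K'_{2,4}$ appearing in Lemma~\ref{lemma-3nbr}, which belongs to $\pec\setminus\subp\subseteq\fec$, has vertices of degree $3$ adjacent to a vertex of large degree, and no $K_{3,3}$ immersion. More fundamentally, graphs in $\pec\setminus\subp$ routinely contain degree-$3$ vertices, so the reduction to $\delta(G)\ge 4$ is not available. The paper only uses the much weaker dichotomy: either $G$ is $3$-regular (and hence, being planar by Lemma~\ref{lemma-nonpl}, lies in $\subp$), or some single vertex has degree at least four; no assumption on the minimum degree is made.

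Second, the pinching case does not come from cuts of order $4$ with $|A|=2$. Weak $5$-edge-connectivity by definition allows $4$-cuts with a side of size at most $2$, so a graph in $\fec$ that is not weakly $5$-edge-connected is one that admits a $4$-cut $(A,B)$ with $|A|,|B|\ge 3$. (In fact, for a graph $G'$ obtained by pinching a double edge $uv$ into $u,w,v$, the ``witness'' bad cut is $\{u,w,v\}$ of size $3$, not $\{u,w\}$ of size $2$.) Your $|A|=2$ analysis also rests on $\delta(G)\ge 4$, which fails as above. In the paper, pinching arises within the span analysis: after fixing a plane drawing and the four cut edges $e_1,\ldots,e_4$ in cyclic order around the face of $G[A]$ containing $B$, one examines whether $G[A]$ together with $S$ is just the union of the two boundary walks $W_1,W_2$; if so, $G[A]$ is a path of double edges and $G$ is obtained by pinching. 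You have no analogue of this structural step.

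Finally, you flag but do not carry out the crucial technical content: showing that both $A$ and $B$ immerse an $S$-vertex and all three cyclic types of spans (using the planar drawing, immersion bridges, and a careful re-routing argument), and showing that $B$ cannot consist solely of degree-$3$ vertices (since the face cycle of $G[B]$ together with an appropriate span in $A$ would immerse $K_{3,3}$). These are precisely what makes the auxiliary graphs $G_A,G_B$ land in $\fec\setminus\subp$ in the special $4$-join case; without them the pullback of an alleged $K_{3,3}$ immersion in $G_A$ to one in $G$ does not go through. In short, the skeleton of the decomposition is recognizable, but the base-case reduction is wrong, the pinching mechanism is misidentified, and the span machinery is only named, not supplied.
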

\begin{proof}
Suppose that $G\in \fec$ and $G'$ is obtained from $G$ by pinching a double edge between vertices $u$ and $v$ of degree
four, introducing a new vertex $w$.
If $G'$ were not internally $4$-edge-connected, it would contain a separation $(A,B)$ of order three with say $u,w\in A$ and $|B|\ge 2$.
Since $G$ is internally $4$-edge-connected and its separation $(A\setminus\{w\},B)$ also has order three, we have $|A\setminus\{w\}|=1$,
and thus $A=\{u,w\}$.  However, since $(A,B)$ has order three and $wv$ is a double edge, this implies $\deg(u)=3$, which is a contradiction.
Hence, $G'$ is internally $4$-edge-connected.  Suppose now $G'$ contains $K_{3,3}$ as an immersion.  Since this immersion does not appear in $G$,
$w$ is a branch vertex of this immersion.  Since any cut separating three vertices of $K_{3,3}$ has size at least $5$ but only four edges
leave $\{u,w,v\}$ since $\deg(u)=\deg(v)=4$ we can by symmetry assume that $u$ is not a branch vertex.
But then we can modify the immersion so that $u$ is the branch vertex instead of $w$, and after suppressing $w$ we would obtain an immersion of $K_{3,3}$
in $G$.  This is a contradiction, and thus $G'\in \fec$.

Suppose that $G_1,G_2\in \fec$ and $G'$ is obtained from them by a special $4$-join (and in particular $|V(G_1)|,|V(G_2)|\ge 5$), with an associated separation $(A,B)$ of order four.
Let us consider the case that $G'$ is not internally $4$-edge-connected due to a separation $(C,D)$ of order at most three.  If $A\cap C=\emptyset$,
then $(C,V(G_2)\setminus C)$ is a separation of $G_2$ contradicting the assumption that $G_2$ is internally $4$-edge-connected.
Hence, $A\cap C\neq\emptyset$, and symmetrically $A\cap D,B\cap C,B\cap D\neq\emptyset$.  Since $|V(G_1)|\ge 5$,
we have $|A|\ge 3$, and since $G_1$ is internally $4$-edge-connected, we have $|\partial(A\cap C)|+|\partial(A\cap D)|\ge 7$.
The number of edges of $G'$ between $A\cap C$ and $A\cap D$ therefore is at least $\lceil (|\partial(A\cap C)|+|\partial(A\cap D)|-|\partial(A,B)|)/2\rceil\ge 2$,
and symmetric argument shows that there are at least two edges between $B\cap C$ and $B\cap D$.  This is a contradiction, since $|\partial(C,D)|\le 3$.
Hence, $G'$ is internally $4$-edge-connected.
If $G'$ contains an immersion of $K_{3,3}$, then $B$ cannot contain exactly three branch vertices, since any cut separating three vertices of $K_{3,3}$ has size at least $5$.
Hence, by symmetry we can assume that at most two branch vertices are contained in $B$.  But then the part of the $K_{3,3}$ immersed in $B$
can also be immersed in two vertices joined by a triple edge, and thus $K_{3,3}$ would also be immersed in $G_1$, which is a contradiction.
We conclude that $G'\in \fec$.  Therefore, all graphs obtained from the described construction indeed belong to $\fec$.

Conversely, we will prove by induction on the number of vertices that every graph in $\fec$ can be obtained by the construction.
Consider a graph $G\in \fec$.  If $G$ is weakly $5$-edge-connected, then the claim is trivial, and thus we can assume that
$G$ contains a separation $(A,B)$ of order four with $|A|,|B|\ge 3$.  Consequently, $|V(G)|\ge 6$, and thus 
by Lemma~\ref{lemma-nonpl}, $G$ is planar; we will from now on work with some fixed plane drawing of $G$.  If $G$ is $3$-regular, then the claim is again trivial, and thus
we can assume $A$ contains a vertex $v$ of degree at least four.

Since $G$ is $3$-edge-connected, $G[B]$ is connected, and thus it is drawn inside a single face $f$ of $G[A]$.
Let $e_1$, $e_2$, $e_3$ and $e_4$ be the edges of $\partial(A,B)$ in the cyclic order according to the drawing of their
ends in the boundary walk of $f$.  Let $S=\{e_1,e_2,e_3,e_4\}$.  Since $G$ is internally $4$-edge-connected,
it contains pairwise edge-disjoint paths from $v$ to $S$, and thus $A$ immerses an $S$-vertex.

Consider now the walks $W_1$ between $e_1$ and $e_2$ and $W_2$ between $e_3$ and $e_4$ in the boundary of $f$.
Note that $W_1$ and $W_2$ are edge-disjoint: if they shared an edge $e$, then $\{e_1,e_4,e\}$ and $\{e_2,e_3,e\}$
would be cuts in $G$ of order three, and since $G$ is internally $4$-edge-connected, this would imply $|A|\le 2$,
which is a contradiction.
If $E(G[A])\cup S=E(W_1)\cup E(W_2)$, then (since $\delta(G)>2$) all vertices of $A$ belong to both $W_1$ and $W_2$ and $G[A]$ is
a path of double edges; hence, $G$ is obtained by pinching a double edge from the graph $G_0$ obtained from $G$ by suppressing the internal vertices of $W_1$,
the graph $G_0$ belongs to $\fec\setminus\subp$ as we argued before, and the claim follows by the induction hypothesis.
Therefore, we can assume there exists an immersion $(W_1\cup W_2)$-bridge $H$ in $G[A]$.
If $H$ were vertex-disjoint from $W_1$, then the two edges of $W_2$ entering and leaving the first and the last vertex
in that $W_2$ intersects $H$ would form a cut in $G$, contrary to the $3$-edge-connectivity of $G$.
By symmetry, we conclude that $H$ intersects both $W_1$ and $W_2$.  If $|V(H)\cap (V(W_1)\cup V(W_2))|\ge 2$, then $H$ contains distinct
vertices $v_1\in V(W_1)$ and $v_2\in V(W_2)$, and $W_1$, $W_2$ and a path between $v_1$ and $v_2$ in $H$ certify that
$A$ immerses an $(e_1,e_2,e_3,e_4)$-span. If $|V(H)\cap (V(W_1)\cup V(W_2))|=1$, then let $v_1$ be the vertex in this intersection
and $v_2$ another vertex of $H$.  Since $G$ is $3$-edge-connected, $H$ contains three edge-disjoint paths between $v_1$ and $v_2$, and
again, it is easy to see that $A$ immerses an $(e_1,e_2,e_3,e_4)$-span.  By symmetry, we can also assume $A$ immerses an $(e_2,e_3,e_4,e_1)$-span.

Since $A$ immerses an $S$-vertex, $G$ contains edge-disjoint paths $W_1$ between $e_1$ and $e_3$ and $W_2$ between $e_2$ and $e_4$;
choose such paths with $E(W_1\cup W_2)$ minimal.
Suppose that there exists an immersion $(W_1\cup W_2)$-bridge $H$ in $G[A]$.  If $H$ intersects both $W_1$ and $W_2$,
then arguing as in the previous paragraph, we conclude that $A$ immerses an $(e_1,e_3,e_2,e_4)$-span.  If $H$ intersects
say only $W_1$, then consider the minimal subpath $P$ of $W_1$ containing all the attachments of $H$.  Since $G$ is $3$-edge-connected,
$P$ intersects $W_2$ in a vertex $w$.  We can now route $W_1$ through $H$ to obtain a path $W'_1$ between $e_1$ and $e_3$ edge-disjoint from $W_2$
and such that $P$ is a part of an immersion $(W_1\cup W_2)$-bridge $H$ in $G[A]$ intersecting both $W'_1$ and $W_2$; and again, we conclude that
$A$ immerses an $(e_1,e_3,e_2,e_4)$-span.  Finally, suppose that there exists no immersion $(W_1\cup W_2)$-bridge in $G[A]$,
and thus $E(G[A])\cup S=E(W_1)\cup E(W_2)$.  If $G[A]$ is a path of double edges, then $G$ is obtained by pinching a double edge
and the claim holds by the induction hypothesis.  Otherwise, let $xy$ be an edge of $W_2-\{e_2,e_4\}$ not parallel to an edge of $W_1$,
let $P$ be the subpath of $W_1$ between $x$ and $y$, let $W'_1$ be obtained from $W_1$ by replacing $P$ by $xy$ and let $W'_2$
be obtained from $W_2$ by replacing $xy$ by $P$.  Then $W'_1$ and $W'_2$ are edge-disjoint walks from $e_1$ to $e_3$ and from $e_2$ to $e_4$;
but $W'_2$ visits all internal vertices of $P$ twice, and thus there exists a path $W''_2$ from $e_2$ to $e_4$ with $E(W''_2)\subsetneq E(W'_2)$,
contradicting the assumption $E(W_1\cup W_2)$ is minimal.

Therefore, we can assume $A$ immerses all $(e_{\pi(1)}, e_{\pi(2)}, e_{\pi(3)}, e_{\pi(4)})$-spans, where $\pi$ is any permutation of $\{1,\ldots,4\}$.
If all vertices of $B$ have degree three in $G$, then since $|B|\ge 3$ and $G$ is internally $4$-edge-connected,
we conclude that $G[B]$ is $2$-edge-connected, and thus also $2$-connected.  Therefore, the face of $G[B]$ in which $G[B]$ is drawn
is bounded by a cycle $K$, and the edges of $S$ attach to distinct vertices of $K$.
The cycle $K$ together with the $(e_1,e_3,e_2,e_4)$-span in $A$, this gives an immersion of $K_{3,3}$ in $G$, which is a contradiction.

Therefore, at least one vertex of $B$ has degree at least four, and thus by symmetry, we can also assume $B$ immerses an $S$-vertex
as well as all possible spans.  For $X\in \{A,B\}$, let $G_X$ be the graph obtained from $G[X]$ by adding vertices
$v_X$ and $w_X$, where $v_X$ is joined to
$w_X$ by three edges and $w_X$ is additionally adjacent to the ends of edges of $S$ (with multiplicity).  Note that $G_A$ does not
contain an immersion of $K_{3,3}$, as any such immersion would either contain no branch vertices in $\{v_A,w_A\}$ and pass at most two paths
through this set, or contain one branch vertex in $\{v_A,w_A\}$ and only pass the paths ending in this branch vertex through this set,
or contain two adjacent branch vertices and only pass the paths ending in these branch vertices through this set.
In the first two cases, this would imply that that $G$ contains an immersion of $K_{3,3}$, since $B$ immerses an $S$-vertex.
In the last case, this would imply that that $G$ contains an immersion of $K_{3,3}$, since $B$ immerses the corresponding span.
Symmetrically, $G_B$ does not contain an immersion of $K_{3,3}$.  Observe furthermore that $G_A$ and $G_B$ are internally $4$-edge-connected,
and thus they belong to $\fec\setminus\subp$.  Since $G$ is a special $4$-join of $G_A$ and $G_B$, the claim follows by the induction hypothesis.
\end{proof}

Hence, we only need to describe the structure of graphs in $\pec\setminus \subp$.  In the following section, we prove
these graphs have at most $8$ vertices, thus finishing the argument.

\section{Weakly $5$-edge-connected planar graphs}

Let us give some observations on graphs in $\pec$; by Lemma~\ref{lemma-nonpl}, all such graphs with at least $6$ vertices
are planar, and we consider them with some fixed drawing in the plane.  We focus on graphs in $\pec$ with the smallest possible number of edges
for a given number of vertices: let $\underline{\pec}$ denote the set of graphs $G\in \pec\setminus\subp$ such that no graph with $|V(G)|$ vertices and less than $|E(G)|$ edges
belongs to $\pec\setminus\subp$; i.e., every weakly $5$-edge connected graph with $|V(G)|$ vertices and less than $|E(G)|$ edges
either is planar $3$-regular, or contains an immersion of $K_{3,3}$.

\begin{lemma}\label{lemma-2con}
All graphs $G\in\underline{\pec}$ with at least $6$ vertices are $2$-connected.
\end{lemma}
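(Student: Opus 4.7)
The plan is to argue by contradiction. Suppose $G \in \underline{\pec}$ has at least six vertices but is not $2$-connected. Since $G$ is weakly $5$-edge-connected, hence $3$-edge-connected and connected, $G$ has a cut vertex $v$. Let $A$ be the vertex set of one component of $G - v$ and $B = V(G) \setminus (A \cup \{v\})$; write $d_A, d_B$ for the numbers of edges from $v$ into $A$ and $B$. The edge-connectivity hypotheses force $d_A, d_B \ge 3$; moreover, $d_A = 3$ forces $|A| = 1$ (internal $4$-edge-connectivity, so $A$ is joined to $v$ by a triple edge), and $d_A = 4$ forces $|A| \le 2$ (weak $5$-edge-connectivity, since $|B \cup \{v\}| \ge 2$), with symmetric statements for $B$. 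Enumerating the small cases against $|V(G)| \ge 6$ rules out $d_A, d_B \le 4$ simultaneously and in fact yields $\deg(v) = d_A + d_B \ge 7$.

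The strategy is to construct $G' \in \pec \setminus \subp$ with $V(G') = V(G)$ and $|E(G')| < |E(G)|$, contradicting the minimality of $G$ in $\underline{\pec}$. The tool is an edge-splitting at $v$: choose edges $e_1 = va$ and $e_2 = vb$ with $a \in A$, $b \in B$, delete both, and add the edge $ab$, producing $G'$ with one fewer edge on the same vertex set. Splitting off never creates new immersions (Section~2), so $G'$ has no immersion of $K_{3,3}$; and $\deg_{G'}(v) = \deg_G(v) - 2 \ge 5$, so $G'$ is not $3$-regular, hence $G' \notin \subp$. It thus remains to choose $a, b$ so that $G'$ is still weakly $5$-edge-connected. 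For any partition $(C, D)$ of $V(G)$, a direct edge tally shows $|\partial_{G'}(C, D)| = |\partial_G(C, D)|$ except when $v$ and $\{a, b\}$ lie on opposite sides of $(C, D)$, in which case the cut size drops by exactly $2$. Therefore, weak $5$-edge-connectedness of $G'$ reduces to: no $3$- or $4$-cut of $G$ separates $v$ from $\{a, b\}$; every $5$-cut so separating has a singleton side; and every $6$-cut so separating has a side of size at most $2$.

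The main obstacle, and the heart of the argument, will be the combinatorial selection showing that $a, b$ can indeed be chosen to satisfy all these conditions. No $3$-cut of $G$ is of the bad form, since a $3$-cut has a singleton side, $\deg(v) \ge 7$ rules out $\{v\}$, and any other singleton side is on the same side of the cut as each of $v, a, b$. The bad $4$-cuts must be of the form $(\{v, w\}, V(G) \setminus \{v, w\})$, and the equation $\deg(v) + \deg(w) - 2|vw| = 4$ together with $\deg(v) \ge 7$ and $\deg(w) \ge 3$ forces $w$ to be a \emph{bunch} vertex (all of $w$'s edges go to $v$, so $\{w\}$ is a component of $G - v$); a further count against $|V(G)| \ge 6$ shows there is at most one such $w$, which can be avoided by setting $b = w$ or $a = w$ accordingly. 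The size-$5$ and size-$6$ cases admit an analogous but more intricate classification, identifying each obstructing small side as one of the forms $(\{v, w\}, \mathrm{rest})$ or $(\{v, w_1, w_2\}, \mathrm{rest})$ with the $w_i$'s forming a short list of ``bunch-like'' local structures around $v$. I expect that the two independent degrees of freedom in $(a, b) \in A \times B$ always suffice to dodge every obstruction simultaneously, yielding $G' \in \pec \setminus \subp$ with one fewer edge than $G$ and the desired contradiction.
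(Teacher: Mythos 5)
You take the same opening move as the paper: locate a cut vertex $v$, split off a pair of edges $va$, $vb$ with $a$ and $b$ on opposite sides of $v$, and aim to contradict the edge-minimality of $G$ in $\underline{\pec}$. But the crux of the argument — verifying that the split-off graph $G'$ is still weakly $5$-edge-connected — is where you and the paper diverge, and where your proposal is incomplete.

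You try to argue \emph{forward}: classify all cuts of $G$ whose order would drop below acceptability after the split, and then choose $a,b$ to dodge every such cut. This classification is only sketched, and the concluding step is explicitly conjectural (``I expect that the two independent degrees of freedom in $(a,b)$ always suffice''). Some of the sketched claims are also not quite right: you assert that a bad $4$-cut $(\{v,w\},\cdot)$ forces $w$ to be a bunch vertex, but $\deg(v)+\deg(w)-2|vw|=4$ with $\deg(v)\ge 7$ also allows, say, $\deg(v)=7$, $|vw|=4$, $\deg(w)=5$, where $w$ has an edge away from $v$. Likewise, $d_A=4$ does not by itself force $|A|\le 2$ if the opposite side $B\cup\{v\}$ happens to be the small one (e.g.\ $|B|=1$). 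None of these is fatal to the overall plan, but they show that the announced ``short list of bunch-like local structures'' is not as short or clean as suggested, and the $5$- and $6$-cut cases are left entirely open.

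The paper argues \emph{backward} and sidesteps all of this. Take $a,b$ arbitrary; if $G'$ is not weakly $5$-edge-connected, it has a separation $(A,B)$ with no acceptable side, and one checks that the only way this can fail to be already bad in $G$ is if $a$ and $b$ both lie in $B$ while $v\in A$. But then — and this is the observation your proposal misses — $a$ and $b$ lie in \emph{different components} of $G-v$, so $G[B]$ is disconnected. Each component of $G[B]$ sends at least $3$ (and, if of size $\ge 2$, at least $4$) edges out by (internal) $3$- and $4$-edge-connectivity of $G$, which forces $|\partial_G(B)|\ge 7$ whenever $|B|\ge 3$ (and $\ge 6$ when $|B|=2$). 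This contradicts $|\partial_G(B)|=|\partial_{G'}(B)|+2\le 6$, and you are done in a few lines with no case analysis of ``obstructing'' cuts at all. To salvage your proposal you would either need to complete the dodging argument (a real gap, not a routine one), or, better, replace it with this disconnectedness-plus-counting observation, which is the actual heart of the lemma.
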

\begin{proof}
Suppose for a contradiction $v$ is a cut-vertex in $G$, and thus there exist non-empty disjoint sets $C,D\subset V(G)\setminus \{v\}$
such that $V(G)=C\cup D\cup \{v\}$ and no edge of $G$ has one end in $C$ and the other end in $D$.
Since $G$ is $3$-edge-connected, $v$ is joined by at least three edges to each of $C$ and $D$, and thus $\deg(v)\ge 6$.
Let $e_C=vx$ and $e_D=vy$ be arbitrary edges joining $v$ to $C$ and $D$, respectively, and let $G'$ be the graph obtained from $G$ by splitting off $e_C$ and $e_D$.
Clearly $G'$ does not contain $K_{3,3}$ as an immersion, and since $\deg_{G'}(v)\ge 4$, it is not $3$-regular.  Since $G\in \underline{\pec}$, $V(G')=V(G)$ and $|E(G')|<|E(G)|$,
we conclude $G'$ is not weakly $5$-edge-connected.  Hence, $G'$ has a separation $(A,B)$ without an acceptable side, where $v\in A$.
We have $|\partial_G(A,B)|=|\partial_{G'}(A,B)|-[|\{x,y\}\cap B|=1]+|\{x,y\}\cap B|$.  If $\{x,y\}\not\subseteq B$, this would imply
$|\partial_G(A,B)|=|\partial_{G'}(A,B)|$ and the separation $(A,B)$ would not have an acceptable side in $G$, which is a contradiction.

Therefore, $\{x,y\}\subseteq B$ and $|\partial_G(A,B)|=|\partial_{G'}(A,B)|+2$.  However, then $B\cap C\neq \emptyset$ and $B\cap D\neq\emptyset$, and since $G[B]\subseteq G[C]\cup G[D]$,
we conclude $G[B]$ is not connected.  Since $G$ is $3$-edge-connected, it follows that $|\partial_G(B)|\ge |B|+4$,
and thus $|\partial_{G'}(A,B)|=|\partial_G(B)|-2\ge |B|+2$.  Consequently, $B$ is an acceptable side of $(A,B)$ in $G'$, which is a contradiction.
\end{proof}

The next lemma restricts neighborhoods of vertices of degree at least four in graphs from $\underline{\pec}$.

\begin{lemma}\label{lemma-3nbr}
Suppose $G\in\underline{\pec}$ has at least $6$ vertices.
Let $v$ be a vertex of $G$ of degree at least four such that all edges incident with $v$ have
multiplicity one and all vertices adjacent to $v$ have degree three.  Then $G$ is either the wheel with $5$ spokes
or the graph $K'_{2,4}$ obtained from $K_{2,4}$ by doubling the edges incident with one of the vertices of degree four.
\end{lemma}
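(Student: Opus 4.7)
The plan is to exploit the minimality of $|E(G)|$ in $\underline{\pec}$ by performing edge splittings at $v$, using the special structure of the edges at $v$ (single edges, degree-$3$ neighbours). Write $d=\deg_G(v)\ge 4$. Splitting off a pair $vu_i,vu_j$ produces a graph $G'$ with $|V(G')|=|V(G)|$, $|E(G')|=|E(G)|-1$, in which $u_i$ and $u_j$ keep degree $3$ (the lost spokes being replaced by the new edge $u_iu_j$) and $\deg_{G'}(v)=d-2$; moreover $G'$ remains $K_{3,3}$-immersion-free. Whenever $G'\in\pec\setminus\subp$, the minimality of $|E(G)|$ is violated, and this is the engine of the proof.

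For $d\ge 6$, the first step is to show that some single splitting yields $G'\in\pec\setminus\subp$. Since $\deg_{G'}(v)\ge 4$, $G'$ is automatically not $3$-regular, so it suffices to find a pair $(i,j)$ whose splitting preserves weak $5$-edge-connectivity. A splitting only decreases the order of a cut $(A,B)$ when $v\in A$ and both $u_i,u_j\in B$, and then by exactly $2$. If no good pair exists, $G$ must contain, for every pair of neighbours of $v$, a cut of order $5$ or $6$ separating $v$ from that pair such that the resulting cut in $G'$ has no acceptable side. Combining these cuts---in the style of a Mader-type splitting-off lemma adapted to weak $5$-edge-connectivity---one produces either a $4$-cut of $G$ itself with no acceptable side, contradicting $G\in\pec$, or an explicit $K_{3,3}$ immersion in $G$. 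This is the main obstacle of the proof.

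For $d=4$, a single splitting makes $\deg(v)=2$ and breaks $3$-edge-connectivity, so instead I would analyse cuts around $N[v]=\{v,u_1,\ldots,u_4\}$. A local count gives $|\partial N[v]|=8-2m$, where $m$ is the number of edges with both ends in $N(v)$; parity together with weak $5$-edge-connectivity and $|V(G)|\ge 6$ forces $m\in\{0,1,2\}$. For $m\ge 1$, the cut $\partial\{v,u_i,u_j\}$ at an internal edge $u_iu_j$ has order $4$ if $u_iu_j$ is simple (with both sides of size $\ge 3$ using $|V(G)|\ge 6$) and order $2$ if $u_iu_j$ is a double edge, contradicting weak $5$-edge-connectivity in either subcase. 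Hence $m=0$, and all $8$ non-spoke edges of the $u_i$'s enter $W:=V(G)\setminus N[v]$. If $|W|\ge 2$, I would construct a $K_{3,3}$ immersion with branch vertices $v$, two vertices of $W$, and three of the $u_i$'s, using the distribution of the $8$ edges between $N(v)$ and $W$ to furnish the required nine edge-disjoint paths. Thus $|W|=1$ and each $u_i$ has a doubled edge to the unique $w\in W$, yielding $G=K'_{2,4}$.

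For $d=5$, the same scheme applies with $|\partial N[v]|=10-2m$; here $\partial\{v,u_i,u_j\}$ has order $5$ in the simple case, which is not immediately bad, so I would additionally consider cuts of the form $\partial\{v,u_i,u_j,u_k\}$ and use the constraints from acceptable sides on $|W|$ for low-order cuts. The conclusion is that $m=5$ and $W=\emptyset$; the $2$-regular subgraph on $N(v)$ must then be a single $5$-cycle, since any decomposition into shorter cycles would produce a $3$-cut of $G$ with both sides of size $\ge 2$, violating internal $4$-edge-connectivity. This gives $G=W_5$, completing the case analysis.
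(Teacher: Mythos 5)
The critical gap is the $d\ge 6$ case, which you explicitly flag as ``the main obstacle'' and then do not close. Your plan there is to argue that some pair of spokes at $v$ can be split off while preserving weak $5$-edge-connectivity, so that $G'$ contradicts the edge-minimality defining $\underline{\pec}$. But you have no argument for why such a pair always exists; after a split the only dangerous cuts are $5$- and $6$-cuts of $G$ that become $3$- or $4$-cuts with two large sides, and a priori there could be many such cuts obstructing every pair. You gesture at ``a Mader-type splitting-off lemma adapted to weak $5$-edge-connectivity,'' but no such off-the-shelf result applies: Mader's theorem preserves local edge-connectivities, not a mixed condition of the form ``every cut of order $k\le 4$ has a side of size $\le k-2$.'' Proving a usable variant would be a substantial piece of work in its own right, so as written this is a genuine hole rather than a routine step. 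The paper dispatches $d\ge 6$ much more directly: it first shows (via the $2$-connectivity guaranteed by Lemma~\ref{lemma-2con}) that $G-v$ is $2$-edge-connected and planar, looks at the face of $G-v$ containing $v$, and observes that the boundary closed trail together with $\ge 6$ spokes already immerses $K_{3,3}$ (exactly as in the wheel $W_6$); this bounds $\deg(v)\le 5$ without any splitting.

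The remaining cases also have soft spots. For $d=5$, the step ``the conclusion is that $m=5$ and $W=\emptyset$'' is pure assertion; you would need a careful cut analysis to rule out $m\in\{0,\dots,4\}$ and $|W|\ge 1$, and it is not clear your local counts suffice without invoking planarity. For $d=4$ with $m=0$ and $|W|\ge 2$, you claim a $K_{3,3}$ immersion can always be extracted from ``the distribution of the $8$ edges between $N(v)$ and $W$,'' but this is exactly the kind of claim that needs a concrete case analysis — the eight non-spoke edges of the $u_i$ can land on $W$ in many ways, the graph $G[W]$ need not be connected, and routing the nine edge-disjoint paths is not automatic. (The paper sidesteps this by working inside the planar drawing, using the boundary walk structure and analysing immersion $H$-bridges and cuts of order $3$ and $4$.) Your $d=4$, $m\ge 1$ subcase, by contrast, is correct and clean.

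In summary, your approach is a genuinely different, planarity-free, local-counting strategy, and parts of it (the $d=4$, $m\ge 1$ cut computation; the final reduction of the $2$-regular graph on $N(v)$ to a $5$-cycle) are fine. But the argument is incomplete: the $d\ge 6$ case is unresolved, and the $d=5$ and $d=4$/$|W|\ge 2$ subcases rely on unverified claims. The paper's route — establish $2$-edge-connectivity of $G-v$ from edge-minimality, then exploit the planar face structure — handles all three degree cases uniformly and avoids the obstacles you ran into.
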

\begin{proof}
By Lemma~\ref{lemma-2con}, the graph $G-v$ is connected.
Let $f$ be the face of $G-v$ whose interior contains $v$.
Let $e_1$, $e_2$, \ldots, $e_d$ be the edges incident with $v$ in the cyclic order according to the drawing of $G$ and $v_1$, \ldots, $v_d$
the other vertices incident with these edges.  

Suppose first that $G-v$ is not $2$-edge-connected.  Let $C_1$ and $C_2$ be vertex-disjoint maximal $2$-edge-connected
subgraphs of $G-v$ such that each of them is joined to the rest of $G-v$ by a single edge $g_1$ or $g_2$, respectively.
Let $P$ be a path in $G-v$ joining $g_1$ to $g_2$.  Since $G$ is weakly $5$-edge-connected and all edges incident with $v$ have
multiplicity one, at least four distinct neighbors of $v$ belong to each of $C_1$ and $C_2$.  Since $C_1$ and $C_2$ are $2$-edge-connected,
there exists an open tour containing four of the neighbors of $v$ in $C_1$, the path $P$ and four of the neighbors of $v$
in $C_2$.  However, such a graph together with the eight edges to $v$ immerses $K_{3,3}$, with three of the branch vertices contained
in $C_1$ and the other three in $C_2$.  This is a contradiction, and thus $G-v$ is $2$-edge-connected.

Let $W$ be the boundary walk of $f$.  For $i=1,\ldots, d$, let $W_i$ be the subwalk of $W$ between $v_i$ and $v_{i+1}$,
where $v_{d+1}=v_1$.  Since $G$ is $2$-connected, $W_1$, \ldots, $W_d$ are paths.
Since $G-v$ is $2$-edge-connected, the paths $W_1$,\ldots, $W_d$ are pairwise edge-disjoint.
Let $H$ be the subgraph of $G$ consisting of $W$ and the edges incident with $v$.
Since $G$ does not contain an immersion of $K_{3,3}$, we have $\deg(v)\le 5$.

Suppose that an immersion $H$-bridge $B$ attaches only to vertices of two consecutive paths $W_i$ and $W_{i+1}$. Let $f_1$ be the first edge and $f_2$ the last edge
of $W_i\cup W_{i+1}$ that is incident with a vertex of $B$, and let $T$ be the subtour of $W_i\cup W_{i+1}$ between these two edges (but excluding them).
Note that by planarity, no immersion $H$-bridge is incident both with a vertex of $T$ and a vertex outside of $T$.  Furthermore, no vertex of $T$
belongs to a path $W_j$ for $j\not\in \{i,i+1\}$, since $B$ does not attach to $W_j$.  It follows that $\{f_1,f_2,e_{i+1}\}$
is a cut in $G$.  But since $B$ contains at least one edge, both sides of this cut have size at least two, contradicting the assumption that $G$ is weakly $5$-edge-connected.
Similarly, we exclude the case that an immersion $H$-bridge attaches only to $W_i$ for some $i$.

Let us now consider the case that $v$ has degree $5$.  Note that for any $i$, $W_i$ and $W_{i+2}$ are vertex-disjoint and
no immersion $H$-bridge attaches to internal vertices of both of these paths, as otherwise $G$ would contain an immersion of $K_{3,3}$
(no immersion $H$-bridge attaches to their endvertices, since they have degree three).  Consequently, there are no immersion $H$-bridges and $G=H$.
Observe that $W_i$ and $W_{i+1}$ cannot share an internal vertex for any $i$, as otherwise $G$ would contain a cut of size three with both sides of size
at least two.  Therefore, $G=H$ and $G$ is the wheel with $5$ spokes.

Finally, suppose that $\deg(v)=4$.  Suppose that one of the paths $W_i$, say $W_1$, is a single edge.  But then the edges of $W_2$ and $W_4$ incident with $v_2$ and $v_1$,
respectively, together with $e_3$ and $e_4$, form a cut of size four with both sides of size at least three.
This is a contradiction with weak $5$-edge-connectivity of $G$, and hence each path $W_i$ has length at
least two.  If $|V(G)|=6$, this implies that $G=K'_{2,4}$, hence assume that $|V(G)|\ge 7$.  As we argued before, no immersion $H$-bridge
attaches only to vertices of $W_i\cup W_{i+1}$, for any $i$.  Suppose that $G\neq H$, and thus an immersion $H$-bridge $B$ attaches to vertices of say $W_1$ and $W_3$
(and possibly also to $W_2$ or $W_4$).  The attachments of $B$ are internal vertices of the paths, since all neighbors of $v$ have degree
three.  If $B$ has at least two attachments in $W_1\cup W_3$, then $G$ contains an immersion of $K_{3,3}$.  Therefore, the only attachment of $B$ in $W_1\cup W_3$
is a common internal vertex $w$ of $W_1$ and $W_3$.  Since $G$ is $2$-connected, $B$ has another attachment distinct from $w$, which belongs to $W_2\cup W_4$; but then $G$ contains
an immersion of $K_{3,3}$.

We conclude that there are no immersion $H$-bridges, and thus $G=H$.
Since each path $W_i$ has length at least two and $G$ is $3$-edge connected, each path $W_i$ shares an internal vertex with at least one of the other paths.
By planarity, it cannot be the case that $W_1$ intersects only $W_3$ and $W_2$ intersects only $W_4$; therefore, we can by symmetry assume that
$W_1$ and $W_2$ intersect in an internal vertex $w$.
Then $w$ belongs also to $W_3\cup W_4$, as otherwise $G$ would contain a cut of order three with both sides of size at least two.  By symmetry, we can assume that $w$ belongs to $W_3$.
Since every cut of order three has a side of size one, note that $w$ is the only common internal vertex in $W_1\cap W_2$ and in $W_2\cap W_3$, and
hence $w$ is joined to each of $v_2$ and $v_3$ by a double edge.
Since $G\neq K'_{2,4}$, $w$ does not belong to $W_4$.  However, then both sides of the cut consisting of the edges of $W_1$ and $W_3$ incident with $w$ (and not with $v_2$ and $v_3$)
together with $e_2$ and $e_3$ have size at least three.  This is a contradiction with the assumption that $G$ is weakly $5$-edge-connected.
\end{proof}

To ensure that we need to process only finitely many graphs in the computer-assisted enumeration part of the proof,
we need a bound on the multiplicity of the edges.

\begin{lemma}\label{lemma-maxmul}
No graph $G\in\underline{\pec}$ with at least $6$ vertices contains an edge of multiplicity greater than three.
\end{lemma}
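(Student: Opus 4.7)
I would assume for contradiction that $G\in\underline{\pec}$ with $|V(G)|\ge 6$ contains an edge $uv$ of multiplicity $m\ge 4$, and derive a contradiction by producing a graph in $\pec\setminus\subp$ on $|V(G)|$ vertices with fewer edges. First I extract constraints from the connectivity of $G$. By Lemma~\ref{lemma-2con}, $G$ is $2$-connected, so neither $u$ nor $v$ has all its edges going to the other, giving $\deg(u),\deg(v)>m$. Weak $5$-edge-connectivity applied to the separation $(\{u,v\},V(G)\setminus\{u,v\})$, whose small side has size two, forces its order to be at least four, so $\deg(u)+\deg(v)\ge 2m+4$; in particular, we may assume by symmetry that $\deg(u)\ge m+2\ge 6$.

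The reduction is by splitting off. Pick any neighbor $w$ of $u$ with $w\ne v$ (which exists because $\deg(u)>m$), and let $G'$ be obtained from $G$ by splitting off the parallel edge $e=uv$ and $f=uw$. Then $|V(G')|=|V(G)|$, $|E(G')|=|E(G)|-1$, $G'$ contains no immersion of $K_{3,3}$, and $\deg_{G'}(v)\ge 5$ so $G'$ is not $3$-regular. It therefore suffices to choose $w$ so that $G'$ is weakly $5$-edge-connected, contradicting the minimality of $G$ in $\underline{\pec}$. A cut $(A,B)$ of $G'$ has the same order in $G'$ as in $G$ unless, up to swapping sides, $u\in A$ and $v,w\in B$, in which case $|\partial_{G'}(A,B)|=|\partial_G(A,B)|-2$. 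Assume such a cut lacks an acceptable side in $G'$. Since $\partial_G(A,B)$ contains the $m$ parallel $uv$ edges and the edge $uw$, $|\partial_G(A,B)|\ge m+1\ge 5$, which rules out $|\partial_{G'}(A,B)|\le 2$. When $|\partial_{G'}(A,B)|=3$ with $|A|,|B|\ge 2$, the count $|\partial_G(A,B)|=5$ forces $m=4$ and no further cut edges, so every vertex of $A\setminus\{u\}$ has all its edges inside $A$; then $u$ is a cut vertex of $G$, contradicting Lemma~\ref{lemma-2con}. The analogous argument handles $|\partial_{G'}(A,B)|=4$ with $|A|,|B|\ge 3$ whenever $m=5$ or the single extra cut edge is incident with $u$.

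The main obstacle is the residual configuration: $m=4$, $|\partial_G(A,B)|=6$, and the extra cut edge is $ab$ with $a\in A\setminus\{u\}$ and $b\in B$. My plan is to rule this out by exploiting the flexibility in choosing $w$. First, if $u$ has any neighbor $w'\ne v$ joined to it by more than one edge, choosing $f$ among those parallel $uw'$ edges raises $|\partial_G(A,B)|$ to at least $m+2$ and reverts to the already-settled cases; so I may assume every non-parallel edge at $u$ is simple. Then any neighbor $w'$ of $u$ distinct from $v$ and $w$ must lie in $A$, for otherwise $uw'$ would be a second extra cut edge, contradicting $|\partial_G(A,B)|=6$; such a $w'$ exists because $\deg(u)\ge m+2$. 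Running the same analysis with $w'$ in place of $w$ either succeeds (giving the desired contradiction) or produces a second bad cut $(A',B')$ with $v,w'\in B'$ and $w\in A'$. These two cuts cross, and uncrossing them via the submodular inequality $|\partial A|+|\partial A'|\ge|\partial(A\cap A')|+|\partial(A\cup A')|$, together with the tight edge counts around $u$, should force a structural contradiction that completes the proof.
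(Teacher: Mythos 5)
Your approach is genuinely different from the paper's. The paper also exploits the minimality of $G$ in $\underline{\pec}$, but it simply deletes one copy of the parallel edge $uv$ and then shows that any bad cut of $G-e$ must have order exactly $5$ in $G$, with the fifth cut edge $e'=xy$ disjoint from $\{u,v\}$; the contradiction is then obtained by exhibiting an actual immersion of $K_{3,3}$ in $G$, built from two ``doubled triangles'' in $G[A]$ and $G[B]$ found by a Menger-type argument, hooked together by the five cut edges. You instead split off $uv$ with a second edge at $u$, keep the vertex count fixed, and drive the entire argument through cut analysis, never having to construct an immersion. Your preliminary reductions are sound: only cuts separating $u$ from $\{v,w\}$ change order, the order-$\le 3$ cases force $u$ to be a cut-vertex (excluded by Lemma~\ref{lemma-2con}), the multiplicity reduces to $m=4$, parallel classes at $u$ other than $uv$ can be eliminated, and in the residual case the sixth cut edge is not incident with $u$, placing every further neighbor of $u$ inside $A$. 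Trying a second split and obtaining a second order-$6$ bad cut $(A',B')$ with $v,w'\in B'$, $w\in A'$ is also correct.

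The one place the proposal is not yet a proof is the final crossing step, and the inequality you wrote does not close it. With $u\in A\cap A'$, $v\in B\cap B'$, $w\in A'\cap B$, $w'\in A\cap B'$, $|\partial A|=|\partial A'|=6$, the identity $|\partial A|+|\partial A'|=|\partial(A\cap A')|+|\partial(A\cup A')|+2e(A\cap B',A'\cap B)$ gives only $12\ge 6+4+0$ from the available lower bounds, which is consistent, so this crossing yields nothing. What does work is the \emph{opposite} crossing. Taking $X=A$ and $Y=B'$,
\begin{equation*}
|\partial A|+|\partial B'|=|\partial(A\cap B')|+|\partial(A\cup B')|+2\,e(A\cap A',\,B\cap B').
\end{equation*}
All four corners are nonempty (witnessed by $u$, $v$, $w$, $w'$), so $3$-edge-connectivity gives $|\partial(A\cap B')|\ge 3$ and $|\partial(A\cup B')|\ge 3$, while the four parallel $uv$-edges give $e(A\cap A',B\cap B')\ge 4$; the right-hand side is thus at least $14$, but the left-hand side is $12$, a contradiction. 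With that substitution your argument is complete and constitutes a self-contained alternative to the paper's proof, trading its explicit $K_{3,3}$ construction for a posimodularity computation.
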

\begin{proof}
Suppose for a contradiction vertices $u,v\in V(G)$ are joined by at least four edges,
and let $e$ be one of the edges.  Clearly $\max(\deg(u),\deg(v))\ge 6$, and in particular $G-e$ is not $3$-regular.
Since $G$ does not contain $K_{3,3}$ as an immersion, neither does $G-e$, and since $G\in \pec\setminus\subp$,
we conclude that $G-e$ is not weakly $5$-edge-connected.  Let $(A,B)$ be a separation of $G-e$ with no acceptable side.
Since $(A,B)$ has an acceptable side in $G$, we can assume $u\in A$ and $v\in B$.  Consequently, $4\ge |\partial_{G-e}(A,B)|=|\partial(A,B)|-1\ge 3$,
and since no side of $(A,B)$ is acceptable in $G-e$, we have $|A|,|B|\ge 2$.  By Lemma~\ref{lemma-2con}, neither $u$ nor $v$ is a cut-vertex in $G$,
and thus $|\partial_G(A,B)|=5$ (and thus $|A|,|B|\ge 3$), there are exactly four edges between $u$ and $v$,
and $\partial_G(A,B)$ contains an edge $e'=xy$ with $\{x,y\}\cap \{u,v\}=\emptyset$.

We claim that $G[A]$ contains an immersion $\beta$ of a triangle $rst$ with a double edge $rs$ such that $\beta(r)=u$ and $\beta(t)=x$.
Since $u$ is not a cut-vertex, $x$ has a neighbor $w\neq u$ in $A$, joined by an edge $g$.
If an immersion $\beta$ with $\beta(s)=w$ and $\beta(st)=g$ did not exist, then by the min-cut-max-flow duality, there
would exist a separation $(C,D)$ of $G[A]-g$ with $u\in D$ of order at most $2-2[w\in D]-[x\in D]$.
Then $(C,D\cup B)$ would be a separation of $G$ of order at most $2-2[w\in D]-[x\in D]+[x\in C]+[|\{w,x\}\cap D|=1]$.
Since $G$ is $3$-edge-connected, we would have $x,w\in C$; but then $(C,D\cup B)$ would be a separation of $G$ of order $3$ with
both sides of size at least two, contradicting the assumption that $G$ is internally $4$-edge-connected.
Therefore, the immersion $\beta$ as described exists.

By symmetry, there also exists an immersion $\beta'$ of the triangle $rst$ with double edge $rs$ in $G[A]$ such that $\beta'(r)=v$ and $\beta'(t)=y$.
But then $\beta$, $\beta'$, and the edges of $\partial(A,B)$ produce an immersion of $K_{3,3}$ in $G$, which is a contradiction.
\end{proof}

The following two results are proved by computer-assisted enumeration.   The program used to verify the claims can be found
as part of the arXiv submission.

\begin{lemma}\label{lemma-no9}
There are no graphs in $\pec\setminus\subp$ with exactly $9$ vertices.
\end{lemma}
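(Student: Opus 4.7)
The plan is to prove this by computer-assisted enumeration, exhibiting a finite search space of candidate graphs from the structural restrictions in the preceding lemmas and then checking computationally that every candidate contains an immersion of $K_{3,3}$ (or fails to be weakly $5$-edge-connected). First, I would observe that it suffices to show $\underline{\pec}$ contains no graph on $9$ vertices: any $G\in\pec\setminus\subp$ with $|V(G)|=9$ reduces, by iteratively deleting edges while remaining in $\pec\setminus\subp$, to a graph in $\underline{\pec}$ on the same vertex set.

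Next, I would collect the conditions on a hypothetical $G\in\underline{\pec}$ with $|V(G)|=9$. By Lemma~\ref{lemma-nonpl} such a $G$ is planar, by Lemma~\ref{lemma-2con} it is $2$-connected, by Lemma~\ref{lemma-maxmul} each edge has multiplicity at most three, and by weak $5$-edge-connectivity the minimum degree is at least $3$. Since $3\cdot 9=27$ is odd, some vertex has degree $\ge 4$, and by Lemma~\ref{lemma-3nbr} every such vertex is either incident with a multi-edge or has a neighbor of degree $\ge 4$, because neither the wheel with five spokes nor $K'_{2,4}$ has $9$ vertices. Euler's formula applied to the underlying simple graph, combined with the multiplicity cap, then bounds the total number of edges, yielding a finite candidate family.

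I would then enumerate this family by computer, for instance by first listing $2$-connected planar simple graphs on $9$ vertices consistent with the degree and adjacency constraints, then assigning multiplicities to the edges (respecting Lemma~\ref{lemma-maxmul} and the weak $5$-edge-connectivity cut conditions), and finally applying a direct $K_{3,3}$-immersion test to each candidate. Immersion testing on a graph of this size is tractable: for each ordered $6$-tuple of candidate branch vertices, one searches for nine pairwise edge-disjoint paths realizing the $K_{3,3}$ adjacencies, and the graph admits an immersion iff some tuple succeeds.

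The main obstacle is controlling the combinatorial size of the enumeration while correctly quotienting by isomorphism of plane multigraphs; the multiplicity bound from Lemma~\ref{lemma-maxmul} and the neighborhood restriction from Lemma~\ref{lemma-3nbr} are essential to keep the search finite in practice, as without them multi-edges and structural freedom at high-degree vertices would cause the candidate count to explode. Once every candidate is shown to immerse $K_{3,3}$, the hypothetical $G$ cannot exist and the lemma follows.
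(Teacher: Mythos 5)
Your proposal matches the paper's proof: both reduce to $\underline{\pec}$, invoke Lemmas~\ref{lemma-nonpl}, \ref{lemma-2con}, and \ref{lemma-maxmul} to restrict to planar $2$-connected multigraphs on $9$ vertices with edge multiplicities at most three, and then finish by computer enumeration (the paper uses \texttt{plantri} for the underlying simple graphs followed by multiplicity assignment and a $K_{3,3}$-immersion test). The extra filters you mention (Lemma~\ref{lemma-3nbr}, Euler-formula edge bounds) are harmless optimizations but not used in the paper's version of this lemma.
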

\begin{proof}
For contradiction, suppose this is not the case, and thus there exists a graph $G\in\underline{\pec}$ with $9$ vertices.
By Lemmas~\ref{lemma-nonpl} and \ref{lemma-2con}, $G$ is planar and $2$-connected, and the multiplicities of its edges are at most three by Lemma~\ref{lemma-maxmul}.
Using \texttt{plantri}, we enumerated all planar $2$-connected simple graphs with $9$ vertices.
Via a computer program, we tested all possible minimal ways how to increase the multiplicities of their edges to at most three so
that the resulting graph is weakly $5$-edge-connected and not $3$-regular, and verified that all graphs obtained in this way
contain an immersion of $K_{3,3}$.  As $G\in\underline{\pec}$, $G$ would have to appear among these graphs, which is a contradiction.
\end{proof}

\begin{figure}
\begin{center}
\includegraphics{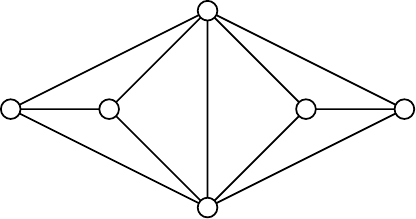}
%
%
%
%
\end{center}
\caption{An exceptional graph from Lemma~\ref{lemma-no678}.}\label{fig-except}
\end{figure}

A similar computer-assisted argument gives us the following.
\begin{lemma}\label{lemma-no678}
The graph depicted in Figure~\ref{fig-except} is the only graph $G\in\pec\setminus\subp$ with $6$, $7$, or $8$ vertices
such that $G$ is $2$-connected, does not contain edges of multiplicity greater than four, does not contain vertices
of degree exactly four, and some vertex of $G$ of degree exactly five is adjacent to a vertex of degree at least five.
\end{lemma}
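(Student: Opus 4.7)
My plan is to follow the same template as the proof of Lemma~\ref{lemma-no9}, adapted to the slightly different constraints. Suppose for contradiction that some $G\in\underline{\pec}$ with $6$, $7$, or $8$ vertices satisfies the listed hypotheses and differs from the exceptional graph of Figure~\ref{fig-except}. By Lemma~\ref{lemma-nonpl}, $G$ is planar (since $|V(G)|\ge 6$), and by hypothesis it is $2$-connected and has no edges of multiplicity greater than four. Hence $G$ arises from some planar $2$-connected simple graph on $6$, $7$, or $8$ vertices by raising edge multiplicities to values in $\{1,2,3,4\}$.

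First, I would generate with \texttt{plantri} the (finite) list of all planar $2$-connected simple graphs on $6$, $7$, and $8$ vertices. For each such skeleton $H$, I would enumerate all assignments of multiplicities in $\{1,2,3,4\}$ to the edges of $H$, producing a finite set of candidate multigraphs $G$. As in Lemma~\ref{lemma-no9}, it suffices to consider only the multiplicity-minimal candidates compatible with the required properties, since a graph in $\underline{\pec}$ must have as few edges as possible for its vertex count. Then I would filter the candidates by the hypotheses of the statement: weakly $5$-edge-connected, not $3$-regular (so not in $\subp$), containing no vertex of degree exactly four, and containing a degree-$5$ vertex adjacent to a vertex of degree at least five.

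Next, for every surviving candidate I would run an immersion test for $K_{3,3}$. This is a finite check: since $K_{3,3}$ has six branch vertices, one enumerates ordered $6$-tuples of distinct vertices of $G$ as branch-vertex assignments and attempts to find nine pairwise edge-disjoint paths realizing the required nine pairs, e.g.\ via repeated max-flow computations in the multigraph. The computation is completely analogous to the one used for Lemma~\ref{lemma-no9}, just with a larger multiplicity bound (four rather than three) but a smaller vertex range (at most eight rather than nine).

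The expected outcome is that every candidate except the graph in Figure~\ref{fig-except} contains an immersion of $K_{3,3}$, contradicting $G\in\pec$. The main obstacle, as usual in computer-assisted enumerations of this kind, is bounding the search space: the number of multiplicity assignments grows exponentially in $|E(H)|$, so one wants to prune aggressively, for example by fixing an arbitrary ordering of edges and only generating multiplicity vectors that are lexicographically minimum in their isomorphism class of $G$, and by discarding candidates as soon as a partial multiplicity assignment already violates one of the numerical constraints (e.g.\ creates a forbidden degree-$4$ vertex or an inacceptable small cut). With these reductions, the enumeration finishes in reasonable time, and verifying that only the graph of Figure~\ref{fig-except} survives completes the proof. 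The program used to carry out the enumeration can be included in the arXiv source, just as for Lemma~\ref{lemma-no9}.
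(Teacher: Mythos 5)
Your overall strategy---a \texttt{plantri} enumeration of planar $2$-connected simple skeletons, followed by multiplicity assignments and an immersion check---is the same as the paper's (the paper simply says ``a similar computer-assisted argument'' and defers to the program in the arXiv submission). However, your reduction to $\underline{\pec}$, i.e.\ to \emph{multiplicity-minimal} candidates, does not carry over from Lemma~\ref{lemma-no9} to this lemma, and this is a genuine gap.

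In Lemma~\ref{lemma-no9} the claim is pure nonexistence: if any $G\in\pec\setminus\subp$ on $9$ vertices existed, one with the fewest edges would lie in $\underline{\pec}$, and since a $K_{3,3}$-immersion in a subgraph persists in any supergraph, it suffices to show that every \emph{minimal} weakly $5$-edge-connected, non-$3$-regular candidate has such an immersion. But Lemma~\ref{lemma-no678} is a \emph{uniqueness} claim about all of $\pec\setminus\subp$, and it is applied in the proof of the main theorem to a graph $G_A$ that is merely in $\pec\setminus\subp$, with no minimality guaranteed. Your proof opens with ``Suppose for contradiction that some $G\in\underline{\pec}$ \ldots'' and later says ``it suffices to consider only the multiplicity-minimal candidates,'' but this only establishes the weaker statement that the exceptional graph is the unique such graph \emph{within $\underline{\pec}$}. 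Concretely: a candidate $G$ satisfying all the stated constraints and immersion-free could have a proper subgraph (on the same vertex set) that also satisfies the constraints; if the only such minimal subgraph is the exceptional graph itself, your enumeration never examines $G$, and the monotonicity of immersion runs the wrong way to save you (the exceptional graph being immersion-free tells you nothing about whether $G$ is). To close the gap you must either enumerate \emph{all} multiplicity assignments up to the bound (not just minimal ones), or separately certify that every strict supergraph of the exceptional graph on the same vertex set that still satisfies the hypotheses does contain an immersion of $K_{3,3}$.

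A second, smaller point: you say ``it suffices to consider only the multiplicity-minimal candidates compatible with the required properties, since a graph in $\underline{\pec}$ must have as few edges as possible for its vertex count.'' Even setting aside the issue above, minimality in $\underline{\pec}$ is minimality over all of $\pec\setminus\subp$ for the given vertex count, whereas your filter imposes additional constraints (no degree-$4$ vertex, a degree-$5$ vertex adjacent to a vertex of degree at least $5$); a graph can be minimal for the filtered family without being in $\underline{\pec}$, and vice versa. The safe route is simply not to invoke $\underline{\pec}$ at all here and enumerate everything up to the multiplicity bound.
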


We now consider the properties of (hypothetical) graphs in $G\in\underline{\pec}$ with at least $10$ vertices.
Firstly, these graphs have very limited multiplicities of edges.

\begin{lemma}\label{lemma-pare}
Let $n\ge 10$ be an integer such that $\pec\setminus\subp$ contains no graphs with exactly $n-1$ vertices,
and let $G\in\underline{\pec}$ have exactly $n$ vertices.  No vertex $v$ of $G$ is incident with an edge of multiplicity at least $\deg(v)/2$,
and in particular, each vertex of $G$ has at least three distinct neighbors.
\end{lemma}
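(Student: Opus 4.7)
The plan is to argue by contradiction and reduce the vertex count by contracting the offending multi-edge. Suppose some vertex $v$ of degree $d$ is incident to an edge of multiplicity $m\ge d/2$, and let $u$ be its other end; then $v$ has $d-m\le m$ edges to vertices other than $u$. Let $G'$ be the graph obtained from $G$ by identifying $u$ and $v$ into a single vertex $z$ and deleting the $m$ resulting self-loops, so $|V(G')|=n-1$ and $|E(G')|=|E(G)|-m$. The goal is to show $G'\in\pec\setminus\subp$, which contradicts the hypothesis that no such graph with $n-1$ vertices exists.

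To see that $G'$ has no immersion of $K_{3,3}$, I would verify that $G'$ itself immerses into $G$: send $z\mapsto u$ and fix every other vertex, realize each edge of $G'$ coming from an edge at $u$ by that edge of $G$, and realize each of the $d-m$ edges of $G'$ coming from an edge $vw$ by the two-edge path consisting of one of the $m$ parallel $uv$-edges followed by $vw$. Since $d-m\le m$, distinct such paths can be assigned distinct $uv$-edges, so the realization is edge-disjoint; composing this immersion with any $K_{3,3}$-immersion in $G'$ would give one in $G$, contradicting $G\in\pec$. For weak $5$-edge-connectivity of $G'$, I would observe that every separation $(A,B)$ of $G'$ with $z\in A$ lifts to the separation $((A\setminus\{z\})\cup\{u,v\},\,B)$ of $G$ of the same cut size, with the $z$-side growing by one. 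Any unacceptable cut of $G'$ (size at most $2$; or size $3$ with both sides of size $\ge 2$; or size $4$ with both sides of size $\ge 3$) would then lift to an unacceptable cut of $G$, contradicting $G\in\pec$.

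It remains to show $G'\notin\subp$. Since $G'$ is a minor of the planar graph $G$, it is planar, so it suffices to argue that $G'$ is not $3$-regular. I would examine the separation $(\{u,v\},V(G)\setminus\{u,v\})$ of $G$, whose cut has exactly $\deg_G(u)+d-2m=\deg_{G'}(z)$ edges. The side $V(G)\setminus\{u,v\}$ has size $n-2\ge 8$, which is acceptable only for cut size at least $5$; the side $\{u,v\}$ has size $2$, acceptable only for cut size exactly $4$. Since $G$ is weakly $5$-edge-connected, this cut has size at least $4$, hence $\deg_{G'}(z)\ge 4>3$ and $G'$ is not $3$-regular. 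Therefore $G'\in\pec\setminus\subp$ has $n-1$ vertices, the desired contradiction. The ``in particular'' clause is then immediate: a vertex with at most two distinct neighbors must be incident to an edge of multiplicity at least $\lceil \deg(v)/2\rceil$, which the first assertion has just ruled out. The main technical point I expect is the bookkeeping in the weak $5$-edge-connectivity argument, where the correspondence between cuts of $G'$ and $G$ under contraction must be traced through all the acceptability thresholds; the bound $d-m\le m$ from the hypothesis is exactly what makes both the immersion lifting and the degree inequality $\deg_{G'}(z)\ge 4$ go through.
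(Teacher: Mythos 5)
Your proposal is correct and follows essentially the same route as the paper: contract the pair $\{u,v\}$ to a new vertex, observe the result has $n-1$ vertices, and verify it lies in $\pec\setminus\subp$ to contradict the hypothesis. The paper phrases the immersion step as splitting off pairs of edges at $v$ and then deleting $v$, which is the same construction as your explicit two-edge-path routing (the condition $d-m\le m$ plays the same role in both), and it justifies $\deg(x)\ge 4$ via internal $4$-edge-connectivity of $G$, equivalent to your separation-of-$\{u,v\}$ argument; your write-up simply spells out the cut-lifting bookkeeping that the paper compresses into a single ``clearly''.
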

\begin{proof}
Suppose for a contradiction that a vertex $v$ of degree $d$ is adjacent to $w$ via an edge of multiplicity $m\ge d/2$.
Let $G'$ be the graph obtained from $G$ by contracting $v$ and $w$ to a new vertex $x$.  Clearly, $G'$ is weakly $5$-edge-connected.
Furthermore, $G'$ can be seen as created from $G$ by splitting off pairs of edges at $v$ and then deleting $v$,
and thus $G'$ does not contain $K_{3,3}$ as a minor.  Furthermore, $\deg_{G'}(x)=|\partial_G(\{v,w\})|\ge 4$, since $G$ is internally $4$-edge-connected,
and thus $G'$ is not $3$-regular.  But $G'$ has exactly $n-1$ vertices, contradicting the assumptions of the lemma.
\end{proof}

Secondly, these graphs have no vertices of degree exactly four.

\begin{lemma}\label{lemma-nodeg4}
Let $n\ge 10$ be an integer such that $\pec\setminus\subp$ contains no graphs with exactly $n-1$ vertices,
and let $G\in\underline{\pec}$ have exactly $n$ vertices.  No vertex $v$ of $G$ has degree exactly four.
\end{lemma}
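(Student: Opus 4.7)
The plan is to argue by contradiction: assuming some $v\in V(G)$ has degree exactly four, split off at $v$ and produce a smaller counterexample in $\pec\setminus\subp$.  First, by Lemma~\ref{lemma-pare}, all four edges at $v$ are simple and $v$ has four distinct neighbors $u_1,u_2,u_3,u_4$; and by Lemma~\ref{lemma-3nbr}, some $u_i$ must have degree at least four, for otherwise $G\in\{W_5,K'_{2,4}\}$ would have at most six vertices, contradicting $n\ge 10$.

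For each of the three pairings $\pi$ of the four edges $vu_1,vu_2,vu_3,vu_4$ into two disjoint pairs, let $G'_\pi$ be the graph obtained by splitting off both pairs and deleting the now-isolated $v$.  Each $G'_\pi$ has $n-1$ vertices, admits no immersion of $K_{3,3}$ (splitting off preserves this), and contains a vertex of degree at least four (the chosen $u_i$), so $G'_\pi\notin\subp$.  Hence it will suffice to exhibit some $\pi$ for which $G'_\pi$ is weakly $5$-edge-connected: that $G'_\pi$ would then lie in $\pec\setminus\subp$ on $n-1$ vertices, contradicting the hypothesis of the lemma.

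The next step is to prove the existence of such $\pi$.  I will assume the contrary and analyse the resulting bad separations.  Given a bad separation $(A_\pi,B_\pi)$ of $G'_\pi$, write $m_\pi$ for the number of $G$-edges across it not incident with $v$, $c_\pi$ for the number of split-off edges across, and $k^A_\pi,k^B_\pi$ for the number of $v$'s neighbors on each side; then $|\partial_{G'_\pi}(A_\pi,B_\pi)|=m_\pi+c_\pi$, and in $G$ the separations $(A_\pi\cup\{v\},B_\pi)$ and $(A_\pi,B_\pi\cup\{v\})$ have orders $m_\pi+k^B_\pi$ and $m_\pi+k^A_\pi$.  Applying weak $5$-edge-connectivity of $G$ to both of these separations and running a short case analysis over the order of the bad cut and the values of $(c_\pi,k^A_\pi,k^B_\pi)$ will force $c_\pi=0$, $(k^A_\pi,k^B_\pi)=(2,2)$, $m_\pi\in\{3,4\}$, and $\pi$ to be the unique pairing that matches the $2$-$2$ partition of $\{u_1,u_2,u_3,u_4\}$ induced by $(A_\pi,B_\pi)$.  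Since there are exactly three pairings and three $2$-$2$ partitions of a four-element set, the three bad separations will realise all three partitions.

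The main obstacle is extracting a contradiction from this highly constrained configuration: three cuts of sizes in $\{3,4\}$ in the planar graph $G-v$ (planarity by Lemma~\ref{lemma-nonpl}), each separating a different $2$-$2$ partition of the neighbors $u_1,\ldots,u_4$ of $v$.  I plan to exploit the cyclic order of $u_1,\ldots,u_4$ around $v$ in a fixed plane drawing of $G$, together with Menger's theorem applied across each of the three bad cuts, to construct an immersion of $K_{3,3}$ in $G$ with $v$ as one branch vertex and five further branch vertices chosen from the $u_i$ and the octants formed by intersecting the three bad separations.  Such an immersion will contradict $G\in\pec$, completing the proof.
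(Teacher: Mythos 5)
Your plan correctly reduces to the right case analysis, but the final step is a genuine gap, and your overall approach is more complicated than what is actually needed.

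Your setup (Lemmas~\ref{lemma-pare} and \ref{lemma-3nbr} giving four distinct simple edges at $v$ and a neighbor of degree at least four) matches the paper. Your step that pins down the bad separations also checks out: a careful case analysis really does force $c_\pi=0$, $(k^A_\pi,k^B_\pi)=(2,2)$, $m_\pi\in\{3,4\}$, and $\pi$ equal to the induced $2$-$2$ partition of the neighbors. However, you then leave the crucial final step (``extract a contradiction from three such cuts by building a $K_{3,3}$ immersion with $v$ as a branch vertex'') as a plan rather than an argument, and I do not believe it goes through as stated. The paper's own contradiction is \emph{not} a $K_{3,3}$ construction; it is a cut-counting argument, and there is no obvious way to turn three $3$- or $4$-cuts of $G-v$ into a $K_{3,3}$ immersion in a weakly $5$-edge-connected planar graph. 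As written, your proof is incomplete at exactly the point where the real work happens.

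The paper also shows that you do not need all three pairings. Fix a plane embedding of $G$ and split off only the ``crossing'' pairing, i.e.\ $e_1$ with $e_3$ and $e_2$ with $e_4$ where $e_1,e_2,e_3,e_4$ is the cyclic order at $v$. Your case analysis then tells you that the bad separation $(A,B)$ of $G'$ must have $\{v_1,v_3\}\subseteq A$ and $\{v_2,v_4\}\subseteq B$. By planarity, $G[A]$ cannot contain a $v_1$--$v_3$ path and simultaneously $G[B]$ a $v_2$--$v_4$ path; say $v_1,v_3$ lie in different components $A_1,A_2$ of $G[A]$. Then
\[
4\ \ge\ |\partial_{G'}(A)|\ =\ |\partial_G(A_1)|+|\partial_G(A_2)|-2\ \ge\ 4,
\]
so $|\partial_G(A_1)|=|\partial_G(A_2)|=3$, hence $|A_1|=|A_2|=1$ by internal $4$-edge-connectivity, which makes $A$ an acceptable side of $(A,B)$ in $G'$ — a contradiction. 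This single-pairing, cut-counting finish is what your proposal is missing; I recommend replacing your speculative $K_{3,3}$-construction step with it.
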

\begin{proof}
Suppose for a contradiction that $\deg(v)=4$.  By Lemma~\ref{lemma-pare}, $v$ is only incident with single edges.
Let $e_1=vv_1$, \ldots, $e_4=vv_4$ be the edges incident with $v$ in the cyclic ordering according to the drawing of $G$.
Let $G'$ be the graph obtained from $G$ by splitting off $e_1$ with $e_3$ and $e_2$ with $e_4$, and deleting $v$.
Clearly, $G'$ does not contain $K_{3,3}$ as an immersion.  By Lemma~\ref{lemma-3nbr}, at least one of $v_1$, \ldots, $v_4$
has degree at least four, and thus $G'$ is not $3$-regular.  Since $G'$ has exactly $n-1$ vertices and
no such graph belongs to $\pec\setminus\subp$, we conclude that $G'$ is not weakly $5$-edge-connected.

Hence, $G'$ has a separation $(A,B)$ without an acceptable side.  If $|\{v_1,v_3\}\cap A|=1$ and $|\{v_2,v_4\}|\cap B=1$,
then note that $(A\cup \{v\},B)$ is a separation of $G$ of order $|\partial_{G'}(A,B)|\le 4$, and since $G$ is weakly $5$-edge-connected,
we conclude $|B|=2$.  However, the same argument applied to $(A,B\cup\{v\})$ shows $|A|=2$, contradicting the assumption that $G$ has $n\ge 10$
vertices.   Therefore, we can by symmetry assume $\{v_1,v_3\}\subseteq A$.  If $|\{v_2,v_4\}\cap A|\ge 1$,
then $(A\cup \{v\},B)$ is a separation of $G$ of order $|\partial_{G'}(A,B)|\le 4$.  Since $G$ is weakly $5$-edge-connected, either
the order of the separations is $3$ and $|B|=1$, or the order is $4$ and $|B|\le 2$; in either case, $B$ is an acceptable side of
$(A,B)$ in $G'$, which is a contradiction.  Therefore, $\{v_2,v_4\}\subseteq B$.

By planarity, it cannot be the case that both $G[A]$ contains a path from $v_1$ to $v_3$ and $G[B]$ contains a path from $v_2$ to $v_4$.
By symmetry, we can assume $v_1$ and $v_3$ are in different components $A_1$ and $A_2$ of $G[A]$.  However,
$4\ge \partial_{G'}(A)=|\partial_G(A_1)|+|\partial_G(A_2)|-2\ge 4$, since $G$ is $3$-edge-connected.  This implies
$|\partial_G(A_1)|=|\partial_G(A_2)|=3$, and thus $|A_1|=|A_2|=1$, since $G$ is internally $4$-edge-connected.
But then $|\partial_{G'}(A)|=4$ and $|A|=2$, and thus $A$ is an acceptable side of $(A,B)$ in $G'$, which is a contradiction.
\end{proof}

Finally, we are ready to prove the main result.

\begin{theorem}
Every graph in $\pec\setminus\subp$ has at most $8$ vertices.
\end{theorem}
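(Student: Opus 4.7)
I would prove the theorem by strong induction on $n = |V(G)|$. The base case $n = 9$ is exactly Lemma~\ref{lemma-no9}, and $n \le 8$ is vacuous. For the inductive step, assume that $\pec \setminus \subp$ contains no graph with between $9$ and $n-1$ vertices, and suppose for contradiction that $G \in \underline{\pec}$ has exactly $n \ge 10$ vertices. By Lemmas~\ref{lemma-nonpl} and~\ref{lemma-2con}, $G$ is planar and $2$-connected; by Lemma~\ref{lemma-maxmul}, all edge multiplicities are at most $3$; and by Lemmas~\ref{lemma-pare} and~\ref{lemma-nodeg4}, $G$ has no vertex of degree $4$ and every edge at a vertex $v$ has multiplicity strictly less than $\deg(v)/2$. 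Since $G \notin \subp$, some vertex of $G$ has degree at least $5$.

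The overall strategy is to reduce $G$ to a smaller graph $G' \in \pec \setminus \subp$, contradicting either the inductive hypothesis (if $|V(G')| \ge 9$) or Lemma~\ref{lemma-no678} (if $|V(G')| \le 8$, in which case one need only rule out $G'$ being the exceptional graph of Figure~\ref{fig-except}). The natural reduction is to choose a vertex $v$ of degree $\ge 5$, split off two well-chosen pairs of non-parallel edges incident with $v$ so that $v$ drops to degree one, and then delete $v$. Both operations preserve the absence of a $K_{3,3}$-immersion and leave the degrees of all other vertices unchanged, so $G'$ is $3$-regular only if $v$ was the unique vertex of $G$ of degree $\ge 5$. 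To avoid the latter degenerate case, I would first invoke Lemma~\ref{lemma-3nbr}, which together with $n \ge 10$ forces every vertex of degree $\ge 5$ to have either a multi-edge or another vertex of degree $\ge 5$ as a neighbor; an averaging argument on the planar drawing then produces a degree-$5$ vertex $v$ adjacent to a vertex of degree $\ge 5$, ensuring the reduced graph is not $3$-regular. The fallback configuration, in which every degree-$\ge 5$ vertex is isolated among high-degree vertices and therefore carries a multi-edge, is rigid enough to be handled by direct inspection using the planar drawing and the multiplicity constraints.

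The principal obstacle is verifying that $G'$ remains weakly $5$-edge-connected. For any candidate separation of $G'$ violating this, I would reinsert $v$ and the four removed edges to obtain a corresponding separation of $G$, and then appeal to $G \in \underline{\pec}$, the multiplicity bounds at $v$, and the hypothesis $n \ge 10$ to derive a contradiction. The argument parallels that of Lemma~\ref{lemma-nodeg4}, but the presence of two split-off pairs (rather than one) and the possibility of parallel edges at $v$ force somewhat more case analysis, with the cases organized according to how the two pairs of split-off endpoints distribute across the two sides of the lifted separation.
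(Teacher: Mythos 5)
Your outline shares the paper's setup---induction on $n$, base case $n=9$ from Lemma~\ref{lemma-no9}, and using Lemmas~\ref{lemma-2con}, \ref{lemma-maxmul}, \ref{lemma-pare}, \ref{lemma-nodeg4}, \ref{lemma-3nbr} to find a vertex of degree $\ge 5$ adjacent to another vertex of degree $\ge 5$---but the reduction you propose is not the one the paper uses, and it has real gaps.

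First, there is a parity problem. After Lemma~\ref{lemma-nodeg4} the only degrees are $3$ and $\ge 5$, so you may be forced to work with a vertex $v$ of degree exactly $5$, which is odd. Splitting off two pairs at $v$ leaves it with degree one, and deleting it then removes the surviving edge, dropping the degree of some neighbor $w$ by one. If $w$ had degree $3$ it is now a degree-$2$ vertex, so $G'$ is not even $3$-edge-connected, and if $w$ had degree $\ge 5$ it may now have degree $4$, violating the structure you need to apply the inductive hypothesis or Lemma~\ref{lemma-no678}. So ``split off and delete'' does not give a clean one-vertex reduction here.

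Second, and more fundamentally, the step you call the principal obstacle---showing the reduced graph remains weakly $5$-edge-connected---is not merely a matter of lifting a bad separation back to $G$ and deriving a contradiction. Classical splitting-off theorems (Mader, Lov\'asz) preserve local edge-connectivity, but weak $5$-edge-connectivity is the much stronger requirement that every small cut has an acceptable side, and there is no guarantee that \emph{any} choice of split-off pairs preserves it; you could easily create an order-$4$ cut with both sides of size $\ge 3$. The paper avoids this entirely by turning the failure of weak $5$-edge-connectivity into the key tool rather than the obstacle: it deletes a single edge $e=uv$ with $\deg(u),\deg(v)\ge 5$, notes $G-e$ must fail to be weakly $5$-edge-connected (since $G\in\underline{\pec}$), and extracts a separation $(A,B)$ of order exactly $5$ with $|A|,|B|\ge 3$ and $v\in A$, $u\in B$. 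It then contracts $A$ (resp.\ $B$) to a single vertex to form $G_A$ (resp.\ $G_B$); Menger's theorem shows these are immersed in $G$, so they lie in $\pec\setminus\subp$ and by minimality of $n$ have $\le 8$ vertices; Lemma~\ref{lemma-no678} then forces each to have $\le 5$ vertices or be the exceptional graph, and since both cannot be exceptional, $|V(G)|=|V(G_A)|+|V(G_B)|-2\le 9$, a contradiction. Your plan reduces $n$ by only one and must somehow produce a valid split-off; the paper's contraction of both sides of a width-$5$ cut simultaneously bounds $n$ from two directions and sidesteps the splitting-off problem altogether.
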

\begin{proof}
Suppose for a contradiction that $n\ge 9$ is the smallest integer such that $\pec\setminus\subp$ contains a graph $G$ with $n$ vertices.
We can assume that $G\in \underline{\pec}$.  By Lemma~\ref{lemma-no9}, we have $n\ge 10$.  By Lemma~\ref{lemma-2con}, $G$ is $2$-connected.
By Lemma~\ref{lemma-nodeg4}, $G$ does not contain a vertex of degree exactly $4$, and since $G$ is not $3$-regular, it contains a vertex $v$ of
degree at least five.  By Lemma~\ref{lemma-pare}, all edges of $G$ incident with a vertex of degree three are simple, and thus by Lemma~\ref{lemma-3nbr},
$v$ has a neighbor $u$ of degree greater than three; by Lemma~\ref{lemma-nodeg4}, $u$ also has degree at least five.  Let $e$ be an edge joining $u$ with $v$.

Observe that $G$ does not contain a separation $(A,B)$ of order at most $4$ such that $v\in A$ and $|B|\ge 3$.  Indeed, since $\deg(v)\ge 5$,
we would have $|A|\ge 2$, and since $G$ is weakly $5$-edge-connected, we would have $|A|=2$.  Letting $A=\{v,x\}$ and denoting by $m$
the multiplicity of the edge between $v$ and $x$, Lemma~\ref{lemma-pare} would imply
$4\ge |\partial_G(A,B)|=\deg_G(v)+\deg_G(x)-2m\ge \deg_G(v)$, which is a contradiction.
Symmetrically, $G$ does not contain a separation $(A,B)$ of order at most $4$ such that $u\in B$ and $|A|\ge 3$.

The graph $G-e$ is not $3$-regular and does not contain $K_{3,3}$ as an immersion, and since $G\in \underline{\pec}$, it follows $G-e$ is not
weakly $5$-edge-connected.  Hence, $G-e$ has a separation $(A,B)$ without an acceptable side, where say $v\in A$ and $u\in B$.
By the previous paragraph, $|\partial_G(A,B)|=5$, and since neither $A$ nor $B$ is acceptable in $G-e$, we have $|A|,|B|\ge 3$.

Let $G_A$ be the graph obtained from $G$ by contracting $A$ to a single vertex $a$ of degree five.  Note that by Menger's theorem,
$G$ contains five pairwise edge-disjoint paths from $v$ to $B$, and thus $G_A$ is immersed in $G$; hence, $G_A$ does not contain $K_{3,3}$
as an immersion.  Furthermore, $G_A$ is clearly weakly $5$-edge-connected and not $4$-regular, and thus $G_A\in\pec\setminus\subp$.
Since $|V(G_A)|<n$, the choice of $n$ implies that $|V(G_A)|\le 8$.  Since $G[B]$ is connected and $G$ is $2$-connected, $G_A$ is $2$-connected.
By Lemma~\ref{lemma-maxmul}, no edge of $G$ has multiplicity greater than three, and since $G$ is $2$-connected, the contraction of $A$
could not result in an edge of multiplicity five, and thus $G_A$ has no edge of multiplicity greater than four.  Furthermore, $G_A$
does not contain vertices of degree four, and contains a vertex of degree exactly five adjacent to a vertex of degree at least five.
By Lemma~\ref{lemma-no678}, either $G_A$ is the graph depicted in Figure~\ref{fig-except}, or $|V(G_A)|\le 5$.

By symmetry, the same claim holds for the graph $G_B$ obtained from $G$ by contracting $B$ to a single vertex.  It cannot be the case
that both $G_A$ and $G_B$ are isomorphic to the graph depicted in Figure~\ref{fig-except}, as otherwise $G$ would contain a $4$-cycle
of vertices of degree three, contradicting the assumption that $G$ is weakly $5$-edge-connected.  Hence, we can by symmetry assume
$|V(G_A)|\le 5$ and $|V(G_B)|\le 6$.  However, then $|V(G)|=|V(G_A)|+|V(G_B)|-2\le 9$.  This contradicts Lemma~\ref{lemma-no9}.
\end{proof}

\section*{Acknowledgments}

We would like to thank Tom\'a\v{s} Kaiser, Tereza Klimo\v{s}ov\'a and Daniel Kr\'al' for useful discussions at the beginning of the project.

\bibliographystyle{siam}
\bibliography{../data.bib}

\end{document}